\documentclass[12pt]{article}

      \oddsidemargin20pt
      \evensidemargin20pt
      \textwidth435pt
      \topmargin-20pt
      \textheight650pt

\usepackage{amsmath,amssymb,amsthm}
\usepackage[utf8]{inputenc}

\usepackage{tikz}

\newcommand\NN{\mathbb{N}}

\language0

\theoremstyle{plain}
\newtheorem{theorem}{Theorem}[section]
\newtheorem{proposition}{Proposition}[section]

\newtheorem{lemma}{Lemma}[section]
\newtheorem{corollary}{Corollary}[section]

\theoremstyle{definition}
\newtheorem{definition}{Definition}[section]
\newtheorem{remark}{Remark}[section]

\newtheorem{question}{Question}[section]








\title{$C_0$-semigroups of $m$-isometries on Hilbert spaces}
\author{T. Berm\'{u}dez,
A. Bonilla
 and H. Zaway
\thanks{The first and  second  authors were supported by MINECO and FEDER,
Project MTM2016-75963-P. The third author was supported in part by Departamento de Análisis Matemático of  Universidad de La Laguna and by a grant of Université de Gabès, UNG 933989527.}}

\begin{document}

\maketitle
\begin{abstract}
Let  $\{T(t)\}_{t\ge 0}$ be  a $C_0$-semigroup on a  separable Hilbert space $H$. We characterize that  $T(t)$ is an $m$-isometry for every $t$ in terms that the mapping $t\in \Bbb R^+ \rightarrow \|T(t)x\|^2$ is a polynomial of degree less  than $m$ for each $x\in H$. This fact is used  to study  $m$-isometric right translation semigroup on weighted $L^p$-spaces.
We  characterize the above property  in terms of  conditions on the infinitesimal  generator operator or in terms of  the cogenerator operator  of $\{ T(t)\}_{t\geq 0}$.
Moreover, we prove that a  non-unitary $2$-isometry on a Hilbert space satisfying the kernel condition, that is,
$$
T^*T(KerT^*)\subset KerT^*\;,
$$
then $T$ can be embedded into a $C_0$-semigroup if and only if $dim (KerT^*)=\infty$.

\end{abstract}

\section{Introduction}

Let $H$ be a complex Hilbert space and $B(H)$ denote the $C^*$-algebra of all bounded linear operators on $H$.

A one parameter family $\{ T(t)\}_{t\geq 0}$ of bounded linear operators from $H$ into $H$ is a \emph{$C_0$-semigroup} if:
\begin{enumerate}
\item $T(0)=I$.
\item $T(s+t)=T(t)T(s)$ for every $t,s\geq 0$.
\item $\displaystyle \lim_{t\to 0^+}T(t)x=x$ for every $x\in H$, in the strong operator topology.
\end{enumerate}

The linear operator $A$ defined by
$$
Ax:=  \lim_{t\rightarrow 0^+}\frac{T(t)x-x}{t},
$$
for every
$$
x\in D(A):=\{ x\in H \;\;:\;\; \lim_{t\to 0^+} \frac{T(t)x-x}{t}\mbox{ exists }\}
$$
is called the \emph{infinitesimal generator} of the semigroup $\{ T(t)\}_{t\geq 0}$. It is well-known that $A$ is a closed densely defined linear operator.

If 1 is in the resolvent set of $A$, $\rho (A)$, then the Caley transform  of $A$ defined  by $V:=(A+I)(A-I)^{-1}$ is a bounded linear operator  since $V=I+2(A-I)^{-1}$, called the \emph{cogenerator}  of the $C_0$-semigroup  $\{ T(t)\}_{t\geq 0}$.

Notice that  the point 1 could be on the spectrum of $V$, $\sigma (V)$.

\ \par

For a positive integer $m$, an operator $T\in B(H)$ is called an \emph{$m$-isometry} if
$$
\sum _{k=0}^m { m\choose k} (-1)^{m-k} \|T^{k}x\|^2 =0\; ,
$$
for any $x\in H$.

We say that $T$ is a \emph{strict $m$-isometry } if $T$ is an $m$-isometry but it is not an $(m-1)$-isometry.

\ \par

\begin{remark}
{\rm  \begin{enumerate}
\item For $m\ge 2$, the strict $m$-isometries are not power bounded.  If $T$ is an $m$-isometry, then $\|T^nx\|^2 $ is a polynomial  at $n$ of degree at most $m-1$, for very $x$, \cite[Theorem~2.1]{BMNe}. In particular, $\|T^n\| =O(n)$ for  $3$-isometries and
$\|T^n\| =O(n^{\frac{1}{2}})$ for  $2$-isometries.
\item There are no strict $m$-isometries on finite dimensional spaces for even $m$. See
\cite[Proposition~1.23]{AgS}.
\item If $T$ is an $m$-isometry, then $\sigma (T)=\overline {\mathbb{D}} $ or $\sigma (T) \subseteq \partial \mathbb{D}$,  \cite[Lemma~1.2]{AgS}.
\end{enumerate}
}
\end{remark}

\ \par

The remainder of the paper is organized as follows. In Section 2, we characterize that  $T(t)$ is an $m$-isometry for every $t$ if the mapping $t\in \Bbb R^+ \rightarrow \|T(t)x\|^2$ is a polynomial of degree less  than $m$ for each $x\in H$. This fact is used in the last section in order to study  $m$-isometric right translation semigroup on weighted $L^p$-spaces.
Also, we characterize the above property  in terms of  conditions on the generator operator or in terms that the cogenerator operator be  an $m$-isometry. Moreover, we obtain that $\{ T(t)\}_{t\geq 0}$  is an $m$-isometry for all $t>0$ if and only if $T(t)$ is an $m$-isometry, for all $t\in [0,t_1]$ with $t_1>0$ or on  $[t_1,t_2]$ with $0<t_1<t_2$.

Section 3 is devoted to embed $m$-isometries into $C_0$-semigroups. That is, given  an $m$-isometry $T$ finds a $C_0$-semigroup  $\{ T(t)\}_{t\geq 0}$  such that $T(1)=T$. Using the model for $2$-isometries  we conclude that a non-unitary $2$-isometry  on a Hilbert space such that satisfies the \emph{kernel condition}, that is
$$
T^*T(Ker (T^*))\subset Ker (T^*)\;,
$$
can be embedded into a $C_0$-semigroup   if and only if  $\dim (Ker T^*)=\infty $.

Finally, in Section 4, we obtain a characterization of the right  translation $C_0$-semigroup  on some weighted space, to be a semigroup of $m$-isometries for all $t>0$.

\section{$C_0$-semigroups of $m$-isometries}

Recall that any $C_0$-semigroup $\{ T(t)\}_{t\geq 0}$  have some real quantities associated as:
\begin{enumerate}
\item The \emph{spectral bound}, $s(A)$, by setting
$$
s(A):=\sup\{Re\lambda\;\; :\;\; \lambda \in \sigma (A)\} \;.
$$
\item \emph{Growth bound}, $w_0$, given by
$$
w_0 := \inf \{ w\in \Bbb R:  \exists  M_w\ge 1 \;\;:\;\;  \|T(t)\|\le M_we^{wt}, \;\;  \forall t\ge 0 \} \;.
$$
\end{enumerate}
The above quantities are related in the following way, $s(A) \le w_0$
 and also
\begin{equation}\label{ll}
w_0=\frac{1}{t} \log r(T(t))\;,
\end{equation}
where $r(T(t))$ denotes the spectral radius of the operator $T(t)$.

The following lemma allow us to define the cogenerator of a $C_0$-semigroup  of $m$-isometries.

\begin{lemma}
Let $\{T(t)\}_{t\ge 0}$ be a $C_0$-semigroup on a separable Hilbert space $H$ consisting of $m$-isometries and $A$ its generator. Then $1\in  \rho(A)$
and therefore the cogenerator $V$ of $\{T(t)\}_{t\ge 0}$ is well-defined.
\end{lemma}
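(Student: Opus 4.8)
The plan is to combine the spectral description of $m$-isometries recorded in the Remark with the classical link between the growth bound of a $C_0$-semigroup and the resolvent set of its generator. For any $C_0$-semigroup with generator $A$ and growth bound $w_0$, every $\lambda$ with $\operatorname{Re}\lambda>w_0$ lies in $\rho(A)$. Since $\operatorname{Re}(1)=1$, it therefore suffices to prove that $w_0<1$, and in fact I would establish the stronger bound $w_0\le 0$.

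First I would fix $t>0$ and use the hypothesis that $T(t)$ is an $m$-isometry. By item~(3) of the Remark, its spectrum satisfies $\sigma(T(t))=\overline{\mathbb{D}}$ or $\sigma(T(t))\subseteq\partial\mathbb{D}$; in either case $\sigma(T(t))\subseteq\overline{\mathbb{D}}$, so the spectral radius obeys $r(T(t))\le 1$. Feeding this into the growth-bound formula~\eqref{ll} would give
$$
w_0=\frac{1}{t}\log r(T(t))\le 0<1 .
$$
Consequently $1$ lies in the half-plane $\{\operatorname{Re}\lambda>w_0\}\subseteq\rho(A)$, whence $A-I$ is boundedly invertible, $1\in\rho(A)$, and the cogenerator $V=(A+I)(A-I)^{-1}=I+2(A-I)^{-1}$ is a well-defined bounded operator, as claimed.

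The argument is short because it is essentially a repackaging of standard semigroup theory: the only place the hypothesis is genuinely used is the estimate $r(T(t))\le 1$, which comes directly from the $m$-isometry assumption via the Remark. The least routine point is the inclusion $\{\operatorname{Re}\lambda>w_0\}\subseteq\rho(A)$, which is classical. If one prefers a self-contained route that avoids invoking that inclusion, one can instead verify $1\in\rho(A)$ through the Laplace representation of the resolvent: since $w_0\le 0$ forces $\|T(t)\|\le M_{\eps}e^{\eps t}$ for every $\eps>0$, the integral $\int_0^\infty e^{-t}T(t)\,dt$ converges in operator norm for, say, $\eps<1$ and equals $(I-A)^{-1}$, exhibiting $A-I$ as boundedly invertible and again yielding $1\in\rho(A)$.
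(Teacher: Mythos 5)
Your proof is correct and follows essentially the same route as the paper: bound the spectral radius of each $T(t)$ by $1$ using the spectral description of $m$-isometries, deduce $w_0\le 0$ from equality~(\ref{ll}), and conclude $1\in\rho(A)$ from the classical inclusion $\{\operatorname{Re}\lambda>w_0\}\subseteq\rho(A)$, which is exactly the paper's invocation of $s(A)\le w_0=0$. The Laplace-transform alternative you sketch is a fine self-contained substitute for that last step, but it does not change the substance of the argument.
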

\begin{proof}
If $T(t)$ is an $m$-isometry for all $t$, then the spectral radius of $T(t)$, $r(T(t))$  is 1 for all $t$. Using equality (\ref{ll}), then  $s(A)\le w_0=0$ and then $1\in  \rho(A)$.
\end{proof}

 The following combinatorial result will be necessary for the proof of Theorem~\ref{Thc}.

Denote  ${m \choose k}=0$ if $m<k$ or $k<0$.

\begin{lemma}\label{p}
 Let $m$ be a positive  integer and $p,\, q$ be integers such that $0\leq p,q\leq m$.
 \begin{enumerate}
 \item If $p+q\neq m$, then
 $$
 \sum_{k=0}^m {m \choose k} (-1)^{m-k} \left\{ \sum_{i=0}^p { m-k \choose i} {k \choose p-i}(-1)^i\right\}
 $$
 $$
 \left\{ \sum_{j=0}^q{m-k \choose j} {k \choose q-j} (-1)^j\right\}=0
 $$
  \item If $p+q=m$, then
$$
 \sum_{k=0}^m{m \choose k} \left\{ \sum_{i=0}^q {m-k \choose i} {k \choose q-i} (-1)^i\right\}^2=
  2^m{ m \choose q} = 2^m{ m \choose p} \;.
 $$
 \end{enumerate}
 \end{lemma}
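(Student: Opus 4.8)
The plan is to recognize each inner sum as a single coefficient of an explicit generating polynomial, after which both identities collapse to an elementary product computation. First I would observe that, by the Cauchy product rule for power series,
\[
\sum_{i=0}^{p}\binom{m-k}{i}\binom{k}{p-i}(-1)^i = [x^p]\,(1-x)^{m-k}(1+x)^k,
\]
where $[x^p]$ denotes the coefficient of $x^p$; the boundary convention $\binom{m}{k}=0$ for $m<k$ or $k<0$ is exactly what makes the finite sum agree with this coefficient. Writing the two inner sums as coefficients in \emph{distinct} variables $x$ and $y$, the entire double sum becomes a coefficient extraction that commutes with the outer sum over $k$.

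For part (1), I would then compute, by the binomial theorem,
\[
\sum_{k=0}^{m}\binom{m}{k}(-1)^{m-k}\big[(1-x)(1-y)\big]^{m-k}\big[(1+x)(1+y)\big]^{k} = \big[(1+x)(1+y)-(1-x)(1-y)\big]^m ,
\]
and invoke the elementary identity $(1+x)(1+y)-(1-x)(1-y)=2(x+y)$. Hence the double sum equals $2^m\,[x^p y^q]\,(x+y)^m$. Since $(x+y)^m=\sum_r\binom{m}{r}x^r y^{m-r}$ contains only monomials whose total degree $m$ is split as $r$ and $m-r$, the coefficient $[x^p y^q]$ vanishes unless $p+q=m$. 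This is precisely statement (1), and it simultaneously shows the value is $2^m\binom{m}{p}$ when $p+q=m$.

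For part (2) I would run the same reduction but \emph{without} the sign $(-1)^{m-k}$ and with $q$ in both inner sums, so that the double sum equals $[x^q y^q]\big[(1-x)(1-y)+(1+x)(1+y)\big]^m$. The companion identity $(1-x)(1-y)+(1+x)(1+y)=2(1+xy)$ then gives $2^m\,[x^q y^q](1+xy)^m=2^m\binom{m}{q}$, and $\binom{m}{q}=\binom{m}{m-q}=\binom{m}{p}$ under the hypothesis $p+q=m$, yielding (2). (Alternatively, one may deduce (2) from the part-(1) computation via the symmetry $[x^q]f=(-1)^{m-k}[x^{m-q}]f$ for $f=(1-x)^{m-k}(1+x)^k$, which turns $(-1)^{m-k}A_k^{p}A_k^{q}$ into $(A_k^{q})^2$ when $p+q=m$.)

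I do not expect a genuine obstacle here: once the inner sums are reinterpreted as coefficients, everything rests on the two product identities $(1\pm x)(1\pm y)$ above together with the binomial theorem. The one point requiring care is bookkeeping with the boundary convention on binomial coefficients, namely verifying that the truncated inner ranges really do reproduce the full polynomial coefficients so that no boundary terms are silently lost.
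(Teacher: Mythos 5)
Your proof is correct. For part (1) it is essentially the paper's own argument in different clothing: the paper introduces $s(x,y)=((x+1)(y+1)-(x-1)(y-1))^m$ and $r(x,y)=2^m(x+y)^m$, expands $s$ by the binomial theorem, and compares coefficients of $x^py^q$, which is exactly your coefficient extraction (the paper carries the factor $(x-1)^{m-k}$ and the resulting signs explicitly, while your choice of $(1-x)^{m-k}$ absorbs them). Where you genuinely diverge is part (2). Writing $a_k(p):=\sum_{i=0}^p\binom{m-k}{i}\binom{k}{p-i}(-1)^i$, the paper obtains (2) from the \emph{same} expansion of $s$, using the symmetry (which it only labels ``straightforward to verify'') that for $p+q=m$ one has $a_k(p)=(-1)^{m-k}a_k(q)$, so the signed product $(-1)^{m-k}a_k(p)a_k(q)$ becomes the square $a_k(q)^2$; this is precisely the route you relegate to your final parenthetical. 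Your primary route instead invokes the companion identity $(1+x)(1+y)+(1-x)(1-y)=2(1+xy)$, giving $\sum_k\binom{m}{k}a_k(q)^2=[x^qy^q]\,2^m(1+xy)^m=2^m\binom{m}{q}$ directly. This buys two things: it makes the passage from the signed product to the square unnecessary (removing the step the paper leaves implicit), and it shows that the identity in (2) holds for \emph{every} $0\le q\le m$ with no hypothesis relating $p$ and $q$ --- the assumption $p+q=m$ enters only to rewrite $\binom{m}{q}$ as $\binom{m}{p}$. What the paper's single-identity approach buys in exchange is economy: one polynomial identity and one expansion serve both parts at once.
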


\begin{proof}
We define the polynomials $r$ and $s$  by
 $r(x,y):= 2^m (x+y)^m$ and
 $$
 s(x,y):= ((x+1)(y+1)-(x-1)(y-1))^m\;.
 $$
 Then
\begin{eqnarray*}
r(x,y)&=& 2^m\sum_{k=0}^m {m \choose k} x^ky^{m-k}\\
s(x,y)&=& \sum_{k=0}^m {m \choose k} (-1)^{m-k} (x+1)^k(x-1)^{m-k}(y+1)^k(y-1)^{m-k}\\
&=& \sum_{k=0}^m {m \choose k} (-1)^{m-k} \displaystyle \sum_{h, \;\ell=0}^k {k \choose \ell} {k \choose h}
\displaystyle \sum_{i,\;j =0}^{m-k} {m-k \choose i} {m-k \choose j} (-1)^{i+j} x^{j+h}y^{i+\ell} \\
&=& \sum_{k=0}^m {m \choose k} (-1)^{m-k} \left\{ \sum_{i=0}^p { m-k \choose m-k-i} {k \choose k+i-p}(-1)^i\right\}\\
 & & \left\{ \sum_{j=0}^q{m-k \choose m-k-j} {k \choose k+j-q} (-1)^j\right\}x^{j+h}y^{i+\ell}\\
&=& \sum_{k=0}^m {m \choose k} (-1)^{m-k} \left\{ \sum_{i=0}^p { m-k \choose i} {k \choose p-i}(-1)^i\right\}\\
& & \left\{ \sum_{j=0}^q{m-k \choose j} {k \choose q-j} (-1)^j\right\}x^{j+h}y^{i+\ell}
 \;.
\end{eqnarray*}
Denote $\widehat{x^py^q}^f$ the coefficient  of the power  $x^py^q$ on the polynomial  $f$. Since $s(x,y)=r(x,y)$, then
$\widehat{x^py^q}^s=\widehat{x^py^q}^r$ for every $p$ and $q$. So, if $p+q\neq m$, then $\widehat{x^py^q}^s=\widehat{x^py^q}^r=0$. If $p+q= m$, then
$$
\widehat{x^py^q}^s=\widehat{x^py^q}^r=\widehat{x^py^{m-p}}^r=2^m{m \choose p}=2^m {m \choose m-p}=2^m{m \choose q} \;.
$$
By other hand, it is straightforward to verify that, if $p+q=m$, then
\begin{eqnarray*}
\widehat{ x^py^q}^s&=& \sum_{k=0}^m {m \choose k} \left\{ \sum _{i=0}^q{m-k \choose m-k-i} {k \choose k+i-q} (-1)^i\right\}^2\\
& =& \sum_{k=0}^m {m \choose k} \left\{ \sum _{i=0}^q{m-k \choose i} {k \choose q-i} (-1)^i\right\}^2 \;.
\end{eqnarray*}
This complete the proof.
\end{proof}

Given a $C_0$-semigroup $\{ T(t)\}_{t\geq 0}$, we have two different operators associated to $\{ T(t))\}_{t\geq0}$, the infinitesimal generator $A$ and the cogenerator $V$, if $1\in \rho (A)$. This two operators will be useful  in the next result, where we obtain a natural generalization to  $C_0$-semigroup of $m$-isometries of \cite[Proposition~2.2]{Eva}. See also \cite[Proposition~2.6]{Partington} and \cite[Theorem~2]{Rydhe}.

\begin{theorem}\label{Thc}
Let $\{T(t)\}_{t\ge 0}$ be a $C_0$-semigroup on a  separable Hilbert space $H$. Then the following conditions are equivalent:
\begin{enumerate}
\item[(i)] $T(t)$ is an $m$-isometry for every $t$.
\item[(ii)] The mapping $t\in \Bbb R^+ \rightarrow \|T(t)x\|^2$ is a polynomial of degree less  than $m$ for each $x\in H$.
\item[(iii)] The operator inequality
$$
\sum _{k=0}^m {m \choose k} \langle A^{m-k}x, A^{k}x \rangle=0\;,
$$
for any  $x\in D(A^m)$.
\item[(iv)] The cogenerator $V$ of  $\{T(t)\}_{t\ge 0}$ exists and is an $m$-isometry.
\end{enumerate}
\end{theorem}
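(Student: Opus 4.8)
The plan is to prove the cycle by first establishing (i) $\Leftrightarrow$ (ii) $\Leftrightarrow$ (iii) through analytic arguments centered on the scalar function $\phi_x(t):=\|T(t)x\|^2$, and only then to close the loop with (iii) $\Leftrightarrow$ (iv), which is where the combinatorial Lemma~\ref{p} is brought to bear. The unifying remark tying (i)--(iii) together is that $T(t)^k=T(kt)$, so the $m$-isometry defect of the single operator $T(t)$ is the $m$-th forward finite difference $\sum_{k=0}^m\binom{m}{k}(-1)^{m-k}\phi_x(kt)$, whereas the quadratic form in (iii) will turn out to be the $m$-th derivative $\phi_x^{(m)}(0)$.

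For (ii) $\Rightarrow$ (i) I would note that if $\phi_x$ is a polynomial of degree $<m$ then the $m$-th finite difference operator annihilates it, so $\sum_{k=0}^m\binom{m}{k}(-1)^{m-k}\phi_x(kt)=0$ for every $t$, which is exactly the $m$-isometry identity for $T(t)$. For the converse (i) $\Rightarrow$ (ii), fix $x$: for each fixed $\tau>0$ the operator $T(\tau)$ is an $m$-isometry, so by the Remark (\cite[Theorem~2.1]{BMNe}) the map $n\mapsto\|T(\tau)^nx\|^2=\phi_x(n\tau)$ is a polynomial in $n$ of degree $\le m-1$. Letting $q$ be the degree $\le m-1$ interpolant of $\phi_x$ at the points $0,1,\dots,m-1$, a short interpolation argument carried out first with $\tau=1$ and then with $\tau=1/N$ forces $\phi_x(r)=q(r)$ for every rational $r\ge 0$; since $t\mapsto T(t)x$ is strongly continuous, $\phi_x$ is continuous, hence $\phi_x\equiv q$, which is (ii). (Equivalently, one may observe that $(\Delta_\tau^m\phi_x)(s)=0$ for all $s\ge0$ and all $\tau>0$, and invoke the fact that a continuous function whose $m$-th differences all vanish is a polynomial of degree $<m$.)

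To pass between (ii) and (iii) I would work on the dense domain $D(A^m)$, where $t\mapsto T(t)x$ is $m$-times differentiable with $\frac{d^j}{dt^j}T(t)x=A^jT(t)x$. Leibniz's rule then gives $\phi_x^{(m)}(t)=\sum_{k=0}^m\binom{m}{k}\langle A^{m-k}T(t)x,\,A^kT(t)x\rangle$, so $\phi_x^{(m)}(0)$ is precisely the form in (iii). Thus (ii) $\Rightarrow$ (iii) is immediate since $\phi_x^{(m)}\equiv0$. For (iii) $\Rightarrow$ (ii), applying (iii) to $T(s)x\in D(A^m)$ and using $\phi_{T(s)x}(t)=\phi_x(t+s)$ yields $\phi_x^{(m)}(s)=0$ for all $s$, so $\phi_x$ is a polynomial of degree $<m$ for every $x\in D(A^m)$; one then upgrades to all $x\in H$ by approximating $x$ by $x_n\in D(A^m)$ and observing that $\phi_{x_n}\to\phi_x$ uniformly on compact $t$-intervals while the space of polynomials of degree $<m$ is finite dimensional, hence closed.

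Finally, for (iii) $\Leftrightarrow$ (iv) I would exploit the algebra of the cogenerator. By the opening lemma of this section a $C_0$-semigroup of $m$-isometries has $1\in\rho(A)$; since (i)--(iii) are already equivalent, under (iii) the operator $V=(A+I)(A-I)^{-1}$ is a well-defined bounded operator, while (iv) presupposes its existence. In either case $V^k=(A+I)^k(A-I)^{-k}$, and because $(A-I)^m$ is a bijection of $D(A^m)$ onto $H$ it suffices to test the $m$-isometry defect of $V$ on vectors $x=(A-I)^m w$ with $w\in D(A^m)$, for which $V^kx=(A+I)^k(A-I)^{m-k}w$. Expanding $\|V^kx\|^2=\|(A+I)^k(A-I)^{m-k}w\|^2$ as a double sum $\sum_{p,q}\langle A^pw,A^qw\rangle$ whose coefficients are exactly the bracketed finite sums in Lemma~\ref{p}, the defect $\sum_{k=0}^m\binom{m}{k}(-1)^{m-k}\|V^kx\|^2$ collapses: part~(1) of Lemma~\ref{p} kills every term with $p+q\ne m$, and part~(2) identifies the surviving coefficients ($p+q=m$) as $2^m\binom{m}{p}$. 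Hence the defect equals $2^m\sum_{p=0}^m\binom{m}{p}\langle A^pw,A^{m-p}w\rangle$, i.e. $2^m$ times the form in (iii) at $w$, so $V$ is an $m$-isometry if and only if (iii) holds. I expect the main obstacle to be precisely this bookkeeping --- correctly matching the coefficients produced by expanding $(A+I)^k(A-I)^{m-k}$ against the two combinatorial sums of Lemma~\ref{p}, including the sign $(-1)^{m-k}$ and the collapse to a perfect square when $p+q=m$ --- since by comparison the analytic content of (i)--(iii) is routine.
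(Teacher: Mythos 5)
Your proposal is correct and follows the same overall architecture as the paper's proof: (i)$\Leftrightarrow$(ii) via a functional-equation argument for $\phi_x(t)=\|T(t)x\|^2$, (ii)$\Leftrightarrow$(iii) via the Leibniz formula for the $m$th derivative of $\phi_x$ on $D(A^m)$ plus density, and (iii)$\Leftrightarrow$(iv) by testing the defect of $V$ on vectors $x=(A-I)^m w$, $w\in D(A^m)$, and invoking both parts of Lemma~\ref{p}; your final bookkeeping reproduces exactly the paper's key identity (\ref{bb}). The one place you genuinely diverge is (i)$\Rightarrow$(ii): the paper simply cites Kuczma's theorem on the Fr\'echet functional equation \cite[Theorem~13.7]{Ku1} (a continuous function all of whose $m$th differences $\Delta_\tau^m$ vanish is a polynomial of degree $<m$), whereas your primary argument is self-contained --- you use the discrete fact that $n\mapsto\|T(\tau)^n x\|^2$ is a polynomial of degree $\le m-1$ for each fixed $\tau$, match the interpolants for $\tau=1$ and $\tau=1/N$ on the nonnegative rationals, and conclude by strong continuity; your parenthetical alternative is precisely the paper's route, so nothing is lost either way. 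You also make explicit two points the paper leaves terse: the upgrade of (iii)$\Rightarrow$(ii) from $D(A^m)$ to all of $H$ (uniform convergence of $\phi_{x_n}$ on compacta plus closedness of the finite-dimensional space of polynomials of degree $<m$), and the fact that in (iii)$\Leftrightarrow$(iv) the existence of $V$ on the (iii) side follows from the already-proved equivalence with (i) together with the section's opening lemma giving $1\in\rho(A)$. Both additions are correct and strengthen the exposition rather than change the proof.
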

\begin{proof}
$(i)\Leftrightarrow (ii)$ If   $T(t)$ is an $m$-isometry for every $t$, then
\begin{equation}\label{ecm}
\sum _{k=0}^m { m\choose k}(-1)^{m-k} \|T(t+k\tau)x\|^2 =0\; ,
\end{equation}
for  every $t,\; \tau > 0$ and $x\in H$. From the  assumption on the semigroup, it is clear that the function
 $t\in \Bbb R^+ \rightarrow f(t):= \|T(t)x\|^2$ is continuous. By \cite[Theorem~13.7]{Ku1}, the function $f(t)$ is  a polynomial of degree less than $m$ for each $x\in H$.

Conversely, if the  mapping $t\in \Bbb R^+ \rightarrow \|T(t)x\|^2$ is a polynomial of degree less  than $m$ for each $x\in H$, then $T(t)$ is an $m$-isometry for every $t$, \cite[page~271]{Ku1}.

$(ii)\Leftrightarrow (iii)$ Let  $y\in D(A^m)$. The function $t\in \Bbb R^+ \rightarrow \|T(t)y\|^2$ has $mth$-derivative and it is given by
\begin{equation}\label{eca}
\sum _{k=0}^m { m\choose k} \langle A^{m-k}T(t)y, A^{k}T(t)y \rangle \;.
\end{equation}
By {\it (ii)} and (\ref{eca}) at $t=0$ we have that
$$
\sum _{k=0}^m { m\choose k} \langle A^{m-k}x, A^{k}x \rangle  =0 \;,
$$
for any $x\in D(A^m)$.

Conversely, if (\ref{eca}) holds on $D(A^m)$, then the $mth$-derivative of the function $t\in \mathbb{R}^+\longrightarrow \| T(t)x\|^2$ agree with
$$
\sum _{k=0}^m { m\choose k} \langle A^{m-k}T(t)x, A^{k}T(t)x \rangle \;,
$$ for every $t>0$ and $x\in D(A^m)$. Since $D(A^m)$ is dense by \cite[Theorem~2.7]{pazy}, we obtain the result.

$(iii)\Leftrightarrow (iv)$ It will be sufficient to prove that
\begin{equation}\label{bb}
2^m\sum_{k=0}^m{m \choose k} \langle A^{m-k} x,A^kx\rangle= \sum_{k=0}^m {m \choose k} (-1)^{m-k} \langle (A+I)^k(A-I)^{m-k}x,(A+I)^k(A-I)^{m-k}x \rangle
\end{equation}
for all $x\in D(A^m)$, since (\ref{bb}) is equivalent to
\begin{eqnarray*}
& & 2^m\sum_{k=0}^m \langle A^{m-k}(A-I)^{-m}y, A^k(A-I)^{-m}y\rangle\\
& & =
\sum_{k=0}^m {m \choose k} (-1)^{m-k} \langle (A+I)^k(A-I)^{-k}y, (A+I)^k(A-I)^{-k} y\rangle\\
& & =
\sum_{k=0}^m {m \choose k} (-1)^{m-k} \| V^ky\|^2 \;,
\end{eqnarray*}
for all $y\in R(A-I)^m$.

Note that the second part of  equality (\ref{bb}) is given by
\begin{equation}\label{bbv}
\sum_{k=0}^m {m \choose k} (-1)^{m-k} \sum_{\ell ,\; h=0}^k {k \choose \ell}  {k\choose h} \sum_{i,\; j=0}^{m-k} {m-k \choose i} {m-k \choose j} (-1)^{i+j} \langle A^{\ell+i}x, A^{h+j}x\rangle \;.
\end{equation}
We denote $\widehat{A}_{i,j} $ the numerical coefficient of $\langle A^ix, A^jx\rangle$. It is straightforward to verify that if $p+q=m$, then
\begin{eqnarray*}
\widehat{A}_{q,m-q}&=& \widehat{A}_{m-q,q}\\
&=& \sum_{k=0}^m {m \choose k} \left\{ \sum_{i=0}^q {m-k \choose m-k-i} {k \choose k+i-q} (-1)^i \right\}^2 \\
&=& \sum_{k=0}^m {m \choose k} \left\{ \sum_{i=0}^q {m-k \choose i} {k \choose q-i} (-1)^i \right\}^2 =
2^m{m \choose q} \;,
\end{eqnarray*}
where the last equality is obtained by part (2) of Lemma \ref{p}.

If $p+q\neq m$, then
\begin{eqnarray*}
\widehat{A}_{p,q}&=& \widehat{A}_{m-p,m-q}\\
&=& \sum_{k=0}^m {m \choose k} (-1)^{m-k} \left\{ \sum_{i=0}^p { m-k \choose m-k-i} {k \choose k+i-p}(-1)^i\right\}\\
& & \left\{ \sum_{j=0}^q{m-k \choose m-k-j} {k \choose k+j-q} (-1)^j\right\}
\end{eqnarray*}
\begin{eqnarray*}
&=&\sum_{k=0}^m {m \choose k} (-1)^{m-k} \left\{ \sum_{i=0}^p { m-k \choose i} {k \choose p-i}(-1)^i\right\}\left\{ \sum_{j=0}^q{m-k \choose j} {k \choose q-j} (-1)^j\right\}\\
&=&0 \;,
\end{eqnarray*}
where the last equality is obtained by part (1) of Lemma \ref{p}. So we get the result.
\end{proof}

\begin{corollary}\label{Thcc}
Let $\{T(t)\}_{t\ge 0}$ be a $C_0$-semigroup on a  separable Hilbert space $H$. Then $T(t)$ is a strict $m$-isometry for every $t> 0$ if and only if the cogenerator $V$ of  $\{T(t)\}_{t\ge 0}$ is a strict $m$-isometry.
\end{corollary}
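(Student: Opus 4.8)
The plan is to deduce this strict refinement directly from Theorem~\ref{Thc}, read simultaneously at level $m$ and at level $m-1$. Writing $p_x(t):=\|T(t)x\|^2$, the equivalences $(i)\Leftrightarrow(ii)\Leftrightarrow(iv)$ give the dictionary: $V$ is an $m$-isometry $\iff$ $T(t)$ is an $m$-isometry for all $t$ $\iff$ $\deg p_x\le m-1$ for every $x\in H$; and the same statement with $m$ replaced by $m-1$. Consequently $V$ is a \emph{strict} $m$-isometry (an $m$-isometry that is not an $(m-1)$-isometry) if and only if $\sup_{x\in H}\deg p_x=m-1$. The goal is therefore to show that ``$T(t)$ is a strict $m$-isometry for every $t>0$'' is likewise equivalent to $\sup_x\deg p_x=m-1$.

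The one point requiring care is to control, for a \emph{fixed} $t>0$, whether the single operator $T(t)$ is an $(m-1)$-isometry. Here I would use the semigroup law $T(t)^k=T(kt)$ to rewrite the $(m-1)$-isometry defect of $T(t)$ as
\begin{equation*}
\sum_{k=0}^{m-1}\binom{m-1}{k}(-1)^{m-1-k}\|T(t)^kx\|^2=\sum_{k=0}^{m-1}\binom{m-1}{k}(-1)^{m-1-k}p_x(kt)=\big(\Delta_t^{\,m-1}p_x\big)(0),
\end{equation*}
the $(m-1)$-th forward difference of $p_x$ with step $t$, evaluated at $0$. Once we already know the semigroup is $m$-isometric, each $p_x$ is a polynomial of degree at most $m-1$, and for such a polynomial the iterated difference $\Delta_t^{\,m-1}p_x$ collapses to the constant $(m-1)!\,a_{m-1}(x)\,t^{m-1}$, where $a_{m-1}(x)$ is the coefficient of $t^{m-1}$ in $p_x$. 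Since $t>0$, this shows that $T(t)$ is an $(m-1)$-isometry if and only if $a_{m-1}(x)=0$ for every $x$, i.e. if and only if $\deg p_x\le m-2$ for all $x$ --- a condition that does not depend on the chosen $t$.

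With this, both implications follow. If $T(t)$ is a strict $m$-isometry for every $t>0$, then the semigroup is $m$-isometric (so $\deg p_x\le m-1$), and since $T(t)$ fails to be an $(m-1)$-isometry the previous step forbids $\deg p_x\le m-2$ for all $x$; hence $\sup_x\deg p_x=m-1$ and $V$ is a strict $m$-isometry. Conversely, if $\sup_x\deg p_x=m-1$ then $V$ is a strict $m$-isometry and, by the $t$-independent criterion above, no $T(t)$ with $t>0$ is an $(m-1)$-isometry while each is an $m$-isometry, so every such $T(t)$ is strict. The main obstacle is exactly this backward direction: turning the single algebraic statement ``$V$ is not an $(m-1)$-isometry'' into the pointwise-in-$t$ conclusion that \emph{each} $T(t)$ fails to be an $(m-1)$-isometry. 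The finite-difference computation resolves it, by showing that the $(m-1)$-isometry property of $T(t)$ is governed solely by the leading coefficients $a_{m-1}(x)$, which are independent of $t$.
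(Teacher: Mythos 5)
Your proof is correct. Note that the paper states this corollary without any proof at all, presenting it as an immediate consequence of Theorem~\ref{Thc}; your argument follows that intended route (apply the theorem at levels $m$ and $m-1$, using the dictionary between $m$-isometricity of $V$, of the family $\{T(t)\}$, and the degree bound on $p_x(t)=\|T(t)x\|^2$), but you also identify and close a genuine gap that the paper's implicit argument glosses over. Namely, applying Theorem~\ref{Thc} at the two levels only yields directly that $V$ is a strict $m$-isometry if and only if every $T(t)$ is an $m$-isometry \emph{and there exists some} $t_0>0$ for which $T(t_0)$ is not an $(m-1)$-isometry, whereas the corollary asserts that \emph{every} $T(t)$ with $t>0$ fails to be an $(m-1)$-isometry. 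Your finite-difference computation, showing that for an $m$-isometric semigroup the $(m-1)$-isometry defect of $T(t)$ equals $\bigl(\Delta_t^{\,m-1}p_x\bigr)(0)=(m-1)!\,a_{m-1}(x)\,t^{m-1}$, proves that this failure is governed by the leading coefficients $a_{m-1}(x)$ alone and is therefore independent of $t>0$; this is exactly the upgrade from ``some $t_0$'' to ``all $t>0$'' that a complete proof requires. In short: your proof is correct, takes the approach the paper intends, and is more complete than what the paper records.
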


In the following corollary, we give  an  example of $m$-isometric semigroup.

\begin{corollary}
Let $Q$ be a nilpotent operator of order $n$ on a  separable Hilbert space. Then the $C_0$-semigroup generated by $Q$ is a strict  $(2n-1)$-isometric semigroup.
\end{corollary}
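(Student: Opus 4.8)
The plan is to run everything through the equivalence $(ii)\Rightarrow(i)$ of Theorem~\ref{Thc}, reducing the claim to a statement about the degree of a single scalar polynomial. Since $Q\in B(H)$ is bounded with $Q^n=0$ but $Q^{n-1}\neq 0$, it generates the uniformly continuous (hence $C_0$) semigroup
$$
T(t)=e^{tQ}=\sum_{k=0}^{n-1}\frac{t^k}{k!}Q^k ,
$$
the sum being finite because $Q^n=0$. First I would fix $x\in H$ and compute
$$
f_x(t):=\|T(t)x\|^2=\sum_{k=0}^{n-1}\sum_{j=0}^{n-1}\frac{t^{k+j}}{k!\,j!}\langle Q^k x,Q^j x\rangle ,
$$
observing that this is a polynomial in $t$ of degree at most $2n-2$, whose coefficient of $t^{2n-2}$ arises only from the term $k=j=n-1$ and therefore equals $\|Q^{n-1}x\|^2/\big((n-1)!\big)^2$.

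Since $\deg f_x\le 2n-2<2n-1$ for every $x\in H$, the mapping $t\mapsto\|T(t)x\|^2$ is a polynomial of degree less than $2n-1$, so by $(ii)\Rightarrow(i)$ of Theorem~\ref{Thc} the operator $T(t)$ is a $(2n-1)$-isometry for every $t$.

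For the strictness I would argue directly by finite differences rather than appeal to Theorem~\ref{Thc} again, because I must exclude the $(2n-2)$-isometry property for \emph{each individual} $t>0$, not merely for some $t$. Fix $t_0>0$. Using $T(t_0)^k=T(kt_0)$, the $(2n-2)$-isometry defect of $T(t_0)$ at $x$ is exactly the $(2n-2)$-th forward difference of $f_x$ with step $t_0$,
$$
\sum_{k=0}^{2n-2}{2n-2\choose k}(-1)^{2n-2-k}\,f_x(kt_0).
$$
For a polynomial of degree exactly $2n-2$ this equals $(2n-2)!\,t_0^{\,2n-2}$ times its leading coefficient, via the identity $\Delta_{t_0}^{\,d}(t^{d})=d!\,t_0^{\,d}$. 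Choosing $x_0$ with $Q^{n-1}x_0\neq 0$ (possible since $Q$ has order exactly $n$), the leading coefficient $\|Q^{n-1}x_0\|^2/\big((n-1)!\big)^2$ is strictly positive, so the defect is nonzero for every $t_0>0$. Hence $T(t_0)$ fails to be a $(2n-2)$-isometry, and the semigroup is strictly $(2n-1)$-isometric for all $t>0$.

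The computations are essentially routine; the only two points needing care are the correct identification of the top-degree coefficient of $f_x$ and the decision to handle strictness through the direct difference computation, which is what guarantees the conclusion separately for each $t_0>0$ rather than for an unspecified single value of the parameter.
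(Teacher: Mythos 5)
Your proof is correct, but it takes a genuinely different route from the paper's. The paper never touches the exponential series: it passes to the cogenerator $V=(Q+I)(Q-I)^{-1}$ (well-defined since $\sigma(Q)=\{0\}$, so $1\in\rho(Q)$), computes $V=-I-2Q(I+Q+\cdots+Q^{n-2})$, i.e.\ an isometry plus a commuting nilpotent of order $n$, invokes \cite[Theorem~2.2]{BMNo} to conclude that $V$ is a strict $(2n-1)$-isometry, and then transfers this back to the semigroup via Corollary~\ref{Thcc}, the strict version of the equivalence (i)$\Leftrightarrow$(iv) of Theorem~\ref{Thc}. You instead work with $T(t)=e^{tQ}$ directly: expanding $\|e^{tQ}x\|^2$ as a polynomial of degree at most $2n-2$ with top coefficient $\|Q^{n-1}x\|^2/((n-1)!)^2$ gives the $(2n-1)$-isometry property via (ii)$\Rightarrow$(i) of Theorem~\ref{Thc}, and your finite-difference identity $\Delta_{t_0}^{d}p\equiv d!\,t_0^{d}a_d$ settles strictness for each individual $t_0>0$ — precisely the point that the paper dispatches by quoting Corollary~\ref{Thcc}. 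Your argument is more elementary and self-contained: it needs neither the cogenerator nor the external isometry-plus-nilpotent theorem of \cite{BMNo}, and it produces an explicit positive value of the $(2n-2)$-isometric defect, namely $(2n-2)!\,t_0^{2n-2}\|Q^{n-1}x_0\|^2/((n-1)!)^2$; in fact the same difference identity applied with $d=2n-1$ would let you bypass Theorem~\ref{Thc} entirely. What the paper's route buys is brevity and coherence with its general framework: with Corollary~\ref{Thcc} and the cited theorem in hand, the proof is three lines, and it exhibits the cogenerator explicitly, which is the object the paper's whole second section is built around.
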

\begin{proof}
Since $Q$ is the generator of $\{T(t)\}_{t\ge 0}$ and $1\in \rho (Q)$, then the cogenerator is well-defined and given by
$$
V:=(Q+I)(Q-I)^{-1}\;.
$$
Thus $V=-(Q+I)(I+Q+ \cdots + Q^{n-1})= -I-2Q(I+Q+ \cdots + Q^{n-2})$, that is, the sum of an isometry and a nilpotent operator of order $n$. By \cite[Theorem~2.2]{BMNo}, the cogenerator is a strict  $(2n-1)$-isometry.
 Then  $\{T(t)\}_{t\ge 0}$ is a strict  $(2n-1)$-isometric semigroup by Corollary \ref{Thcc}.
\end{proof}

For a positive integer $m$, a linear closed  operator $A$ defined on a dense set $D(A)\subset H$ is called an \emph{$m$-symmetry} if
$$
\sum _{k=0}^m { m\choose k} (-1)^{m-k}\langle A^{m-k}x,A^kx\rangle =0\; ,
$$
for all $x\in D(A^m)$.
We say that $A$ is a \emph{strict $m$-symmetry } if $A$ is an $m$-symmetry but it is not an $(m-1)$-symmetry. There is not  strict $m$-symmetry bounded  for even  $m$. See \cite[Page~7]{Ag0}.

In the next result, we present the connection between  the condition (iii) of Theorem \ref{Thc} and  the $m$-symmetric operators.

\begin{corollary}
Let $A$ be an   \emph{$m$-symmetry}    on a  separable Hilbert space. Then the $C_0$-semigroup generated by $iA$ is an $m$-isometric semigroup.
\end{corollary}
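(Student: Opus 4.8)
The plan is to verify condition (iii) of Theorem~\ref{Thc} for the generator of the semigroup --- which here is $iA$ --- and then invoke the implication (iii)$\Rightarrow$(i). Since multiplication by the scalar $i$ does not affect domains, we have $D((iA)^m)=D(A^m)$ and $(iA)^j=i^jA^j$ on this common domain, so the quantity to be checked,
$$
\sum_{k=0}^m \binom{m}{k}\langle (iA)^{m-k}x,(iA)^k x\rangle,
$$
is defined precisely for $x\in D(A^m)$, which is exactly where the $m$-symmetry hypothesis on $A$ applies.

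First I would expand each term using the (conjugate-)linearity of the inner product: pulling the scalar $i^{m-k}$ out of the first slot and the scalar $\overline{i^{k}}=(-i)^k$ out of the second slot gives
$$
\langle (iA)^{m-k}x,(iA)^k x\rangle = i^{m-k}(-i)^k\langle A^{m-k}x,A^k x\rangle = (-1)^k\,i^m\,\langle A^{m-k}x,A^k x\rangle.
$$
Summing against the binomial coefficients then factors out the constant $i^m$ and leaves $\sum_{k=0}^m\binom{m}{k}(-1)^k\langle A^{m-k}x,A^kx\rangle$.

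The only remaining point is to reconcile the sign pattern $(-1)^k$ with the $(-1)^{m-k}$ in the definition of an $m$-symmetry. This is immediate from the identity $(-1)^k=(-1)^m(-1)^{m-k}$, so the sum equals $(-1)^m i^m\sum_{k=0}^m\binom{m}{k}(-1)^{m-k}\langle A^{m-k}x,A^kx\rangle$, which vanishes for every $x\in D(A^m)$ because $A$ is an $m$-symmetry. Hence condition (iii) of Theorem~\ref{Thc} holds for $iA$, and the theorem yields that the $C_0$-semigroup generated by $iA$ is an $m$-isometric semigroup.

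The computation is entirely routine; the one thing to be careful about is the conjugate-linear slot of the inner product (equivalently, keeping track of whether one collects $(-i)^k$ or $i^k$), since a slip there would misplace the crucial factor $i^m$ together with its compensating sign and could spuriously produce a nonzero or convention-dependent answer. Beyond this bookkeeping I expect no genuine obstacle, provided one takes for granted --- as the statement implicitly does --- that $iA$ actually generates a $C_0$-semigroup.
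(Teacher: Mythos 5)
Your proposal is correct and follows essentially the same route as the paper: both verify condition (iii) of Theorem~\ref{Thc} for the generator $iA$ by pulling the scalars $i^{m-k}$ and $\overline{i^{k}}=(-i)^k$ out of the inner products, factoring out the constant $(-1)^m i^m=(-i)^m$, and invoking the $m$-symmetry of $A$ to make the sum vanish. Your added remarks on the domain identity $D((iA)^m)=D(A^m)$ and on the implicit assumption that $iA$ generates a $C_0$-semigroup are careful touches the paper leaves tacit, but the argument is the same.
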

\begin{proof}
If $A$ is  an   $m$-symmetry,  then
$$
\sum _{k=0}^m {m \choose k} \langle (iA)^{m-k}x, (iA)^{k}x\rangle= (-i)^m \sum _{k=0}^m (-1)^{m-k}{m \choose k} \langle A^{m-k}x, A^kx\rangle=0\;,
$$
for all $x\in D(A^m)$.
Thus the $C_0$-semigroup generated by $iA$ is an $m$-isometric semigroup. The result is completed by  Theorem \ref{Thc}.
\end{proof}

Let $w$ be a weighted function defined on $\mathbb{T}$. It is defined
$$
L^2_{w}(\mathbb{T}):=\{ f:\mathbb{T}\rightarrow \mathbb{C}\;\;:\;\;\; \int _{\mathbb{T}}|f(z)|^2 w(z)dz<\infty\}\;.
$$

 Let $\{T(t)\}_{t\ge 0}$ be a $C_0$-group defined on $L^2_w(\Bbb T)$ with non-constant weighted function $w$ by $ T(t)f(z):=f(e^{it}z)$. Then $T(t)$ is an isometry if and only if $t$ is a multiple of $2\pi$. See \cite[page~8]{CP}.

 \begin{proposition}
 Let $\{T(t)\}_{t\in \mathbb{R}}$ be a $C_0$-group on a Hilbert space $H$. Then the following conditions  are equivalent:
\begin{enumerate}
\item[(i)] $T(t)$ is an $m$-isometry for every $t$.
\item[(ii)] $T(t)$ is an $m$-isometry for $t_1$ and $t_2$ where $\frac{t_1}{t_2}$ is irrational.
\end{enumerate}
\end{proposition}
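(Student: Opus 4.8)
The plan is to prove the nontrivial implication $(ii)\Rightarrow(i)$; the converse is immediate, since if $T(t)$ is an $m$-isometry for every $t$ one may choose any $t_1,t_2$ with $t_1/t_2$ irrational. Fix $x\in H$ and set $f_x(t):=\|T(t)x\|^2$, which is continuous on $\mathbb{R}$ because $\{T(t)\}$ is a $C_0$-group. Writing $\Delta_h^m g(t):=\sum_{k=0}^m\binom{m}{k}(-1)^{m-k}g(t+kh)$ for the $m$-th forward difference of step $h$, I would first observe that $T(t_j)$ being an $m$-isometry is equivalent to $\Delta_{t_j}^m f_x\equiv 0$ on $\mathbb{R}$: using the group law $T(t_j)^k=T(kt_j)$ and substituting $y=T(t)x$ into the $m$-isometry identity for $T(t_j)$ gives $\sum_{k=0}^m\binom{m}{k}(-1)^{m-k}\|T(t+kt_j)x\|^2=0$ for every $t\in\mathbb{R}$. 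Thus $(ii)$ reduces to the analytic statement that a continuous $f:=f_x$ satisfying $\Delta_{t_1}^m f\equiv 0$ and $\Delta_{t_2}^m f\equiv 0$ with $t_1/t_2\notin\mathbb{Q}$ must be a polynomial of degree less than $m$.

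To prove this I would argue by Fourier analysis, after first checking that $f$ is a tempered distribution. The single equation $\Delta_{t_1}^m f\equiv 0$ says that along each coset $c+t_1\mathbb{Z}$ the sequence $n\mapsto f(c+nt_1)$ is a polynomial of degree less than $m$ in $n$; its coefficients are determined by the values of $f$ at the $m$ points $c,c+t_1,\dots,c+(m-1)t_1$, hence depend continuously on $c$ and are bounded on the compact set $[0,t_1]$. This yields a bound $|f(t)|\le C(1+|t|)^{m-1}$, so $f$ is polynomially bounded and defines a tempered distribution.

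Now, taking Fourier transforms, $\Delta_{t_j}^m f\equiv 0$ becomes $(e^{it_j\xi}-1)^m\widehat{f}=0$. Since $e^{it_j\xi}-1$ is smooth and nonvanishing off $\tfrac{2\pi}{t_j}\mathbb{Z}$, the distribution $\widehat{f}$ is supported in $\tfrac{2\pi}{t_1}\mathbb{Z}\cap\tfrac{2\pi}{t_2}\mathbb{Z}$, and the irrationality of $t_1/t_2$ forces this intersection to be $\{0\}$. A tempered distribution supported at the origin is a finite combination $\sum_j c_j\,\delta^{(j)}$; since multiplication by $(e^{it_1\xi}-1)^m$ kills $\delta^{(j)}$ exactly when $j\le m-1$ (the factor vanishing to order $m$ at $0$), the relation $(e^{it_1\xi}-1)^m\widehat{f}=0$ forces $c_j=0$ for all $j\ge m$. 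Hence $\widehat{f}=\sum_{j=0}^{m-1}c_j\,\delta^{(j)}$, i.e. $f$ is a polynomial of degree less than $m$.

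Finally, since each $f_x$ is a polynomial of degree less than $m$, for every $s\in\mathbb{R}$ one has $\sum_{k=0}^m\binom{m}{k}(-1)^{m-k}\|T(s)^k x\|^2=\Delta_s^m f_x(0)=0$, because the $m$-th difference of a polynomial of degree less than $m$ vanishes; thus $T(s)$ is an $m$-isometry for all $s\in\mathbb{R}$, which is $(i)$. I expect the main obstacle to be the rigorous handling of the two technical points inside the Fourier step: establishing the polynomial growth bound so that $\widehat{f}$ is genuinely tempered, and extracting the sharp degree bound from the order-$m$ vanishing of $(e^{it_1\xi}-1)^m$ at the origin. As an alternative more in the spirit of the paper's use of \cite{Ku1}, one could try to upgrade $\Delta_{t_1}^m f=\Delta_{t_2}^m f=0$ to $\Delta_\tau^m f=0$ for a dense set of steps $\tau$ and then invoke the continuity-based characterization already exploited in the proof of Theorem~\ref{Thc}; the difficulty there is that the two \emph{pure} difference conditions do not obviously produce the mixed differences needed to reach a dense family of steps, which is precisely what the Fourier argument circumvents.
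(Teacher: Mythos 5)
Your proposal is correct, and its skeleton matches the paper's: both you and the authors use the group law to turn the hypothesis into the two difference equations $\Delta_{t_1}^m f_x \equiv \Delta_{t_2}^m f_x \equiv 0$ for the continuous function $f_x(t)=\|T(t)x\|^2$, and both conclude once $f_x$ is known to be a polynomial of degree less than $m$. The difference lies in how that middle step is handled. The paper simply cites Montel's theorem on the Fr\'echet functional equation (\cite[Theorem~13.5]{Ku1} together with \cite[Theorem~1.1]{PR}), which states exactly that a continuous function annihilated by $m$-th differences at two incommensurable steps is a polynomial of degree less than $m$, and then closes by invoking the equivalence (i)$\Leftrightarrow$(ii) of Theorem~\ref{Thc}. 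You instead prove this Montel-type statement from scratch: the growth bound $|f_x(t)|\le C(1+|t|)^{m-1}$ (correctly extracted from the order-$m$ recurrence along the cosets $c+t_1\mathbb{Z}$, with coefficients bounded over $c\in[0,t_1]$), the passage to tempered distributions, the support computation $\operatorname{supp}\widehat{f_x}\subseteq \tfrac{2\pi}{t_1}\mathbb{Z}\cap\tfrac{2\pi}{t_2}\mathbb{Z}=\{0\}$ --- which is precisely where irrationality enters --- and the observation that multiplication by $(e^{it_1\xi}-1)^m$, vanishing to order exactly $m$ at the origin, annihilates $\delta^{(j)}$ only for $j\le m-1$, which yields the sharp degree bound. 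Your final step (the $m$-th difference of a polynomial of degree less than $m$ vanishes for every step $s$) also bypasses the appeal to Theorem~\ref{Thc}. What the paper's route buys is brevity and economy, reusing the functional-equation toolkit already present in the proof of Theorem~\ref{Thc}; what your route buys is a self-contained argument that makes transparent both where $t_1/t_2\notin\mathbb{Q}$ is used (the two frequency lattices meet only at $0$) and where the degree bound comes from, at the cost of heavier machinery (distribution theory and the Fourier transform) than the elementary theory cited in the paper.
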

 \begin{proof}
For  every $t$, we have that
$$
\sum _{k=0}^m (-1)^{m-k}{m\choose k}\|T(t+kt_i)x\|^2 =0\; \mbox{ for } i=1,2\;.
$$
Montel's Theorem (\cite[Theorem~13.5]{Ku1} \& \cite[Theorem~1.1]{PR}) gives that $\|T(t)x\|^2$ is a polynomial of degree less than $m$ for each $x\in H$, since $\frac{t_1}{t_2}$ is irrational. Thus by Theorem \ref{Thc} we have that  $T(t)$ is an $m$-isometry for every $t$.
\end{proof}

 If $T$ is an $m$-isometry, then any power $T^r$ is also an $m$-isometry, \cite[Theorem~2.3]{J}.
 In general, the converse is not true. However, if $T^r$ and $T^{r+1}$ are
$m$-isometries for a positive integer $r$, then $T$ is an $m$-isometry (see, \cite[Corollary~3.7]{BDM}). The stability of the class of $m$-isometry with powers is fundamental to give necessary and sufficient conditions for a $C_0$-semigroup $\{ T(t)\}_{t\geq 0}$ to be $T(t)$ an $m$-isometry for all $t\geq 0$.

\begin{theorem}\label{Thi}
Let $\{T(t)\}_{t\ge 0}$ be a $C_0$-semigroup on a Hilbert space $H$. Then the following properties are equivalent:
\begin{enumerate}
\item[(i)] $T(t)$ is an $m$-isometry for every $t\geq     0$.
\item [(ii)] $T(t)$ is an $m$-isometry for every $t\in [0,t_1]$ for some $t_1>0$.
\item[(iii)] $T(t)$ is an $m$-isometry on an interval of the form $[t_1, t_2]$ with $t_1< t_2$.
\end{enumerate}
\end{theorem}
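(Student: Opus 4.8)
The implications $(i)\Rightarrow(ii)\Rightarrow(iii)$ are immediate: restricting the $m$-isometry property from $[0,\infty)$ to $[0,t_1]$ gives (ii), and any subinterval $[t_1',t_2']\subseteq[0,t_1]$ with $0<t_1'<t_2'$ yields (iii) from (ii). The whole content of the theorem is therefore the implication $(iii)\Rightarrow(i)$, and my plan is to exploit the two-way stability of the $m$-isometry property under powers recalled just before the statement, together with the semigroup identity $T(ns)=T(s)^n$.

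Assume $T(s)$ is an $m$-isometry for every $s\in[t_1,t_2]$ with $0<t_1<t_2$. First I would spread this property to an entire half-line. Since $T(s)$ is an $m$-isometry, so is every power $T(s)^n=T(ns)$ by \cite[Theorem~2.3]{J}; letting $s$ range over $[t_1,t_2]$ shows that $T(u)$ is an $m$-isometry for every $u\in[nt_1,nt_2]$ and every $n\ge 1$ (take $s=u/n$). These dilated copies of the interval overlap as soon as $(n+1)t_1\le nt_2$, i.e. once $n\ge t_1/(t_2-t_1)$, so their union contains a half-line $[L,\infty)$ with $L=Nt_1$ and $N=\lceil t_1/(t_2-t_1)\rceil$. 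Hence $T(u)$ is an $m$-isometry for all $u\ge L$.

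Second, I would descend from the half-line to an arbitrary $t>0$. Fix $t>0$ and choose an integer $r$ so large that both $rt\ge L$ and $(r+1)t\ge L$. Then $T(t)^r=T(rt)$ and $T(t)^{r+1}=T((r+1)t)$ are both $m$-isometries, and the converse stability result \cite[Corollary~3.7]{BDM} forces $T(t)$ itself to be an $m$-isometry. Since $t>0$ was arbitrary and $T(0)=I$ is trivially an $m$-isometry, this establishes (i).

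The two genuinely distinct ingredients are the forward power law (used to climb up to large parameters) and its converse (used to climb back down to small ones), and I expect the covering argument to require the most care: one must verify that the dilated intervals $[nt_1,nt_2]$ eventually overlap and exhaust $[L,\infty)$, which is precisely where the positive length $t_2-t_1>0$ is indispensable—a single instant would give only the sparse set $\{ns\}$ and could not produce a half-line. Once that half-line is in hand, the descent via $T(rt)$ and $T((r+1)t)$ is automatic.
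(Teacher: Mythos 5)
Your proof is correct, and it uses exactly the same two ingredients as the paper --- the forward power law ($T$ an $m$-isometry implies every power $T^n$ is, \cite[Theorem~2.3]{J}, \cite[Theorem~3.1]{BDM}) and the converse criterion ($T^r$ and $T^{r+1}$ $m$-isometries imply $T$ is, \cite[Corollary~3.7]{BDM}) --- but it applies them in the opposite order. The paper applies the converse criterion first, directly inside the given interval: for $t\in(0,\tfrac{t_2-t_1}{4}]$ it chooses $k=[\tfrac{t_1}{t}]+1$, so that $kt$ and $(k+1)t$ both lie in $(t_1,t_2)$, concluding that $T(t)$ is an $m$-isometry for all small $t$; it then climbs up with the forward law (this is its proof of (ii)$\Rightarrow$(i), writing $t=nt'$ with $t'\in[0,t_1]$). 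You climb first: the forward law dilates $[t_1,t_2]$ to $[nt_1,nt_2]$, and your overlap condition $n\ge t_1/(t_2-t_1)$ produces a half-line $[L,\infty)$ of good parameters; the descent step then becomes trivial, since for any $t>0$ two consecutive multiples $rt$, $(r+1)t$ can be placed beyond $L$. What the paper's order buys is brevity --- no covering argument, just one arithmetic choice of $k$, and (iii)$\Rightarrow$(i) is reduced to the already-established (ii)$\Rightarrow$(i). What your order buys is that the delicate step (finding two consecutive multiples inside the good set) is carried out where it is easiest, on a half-line rather than in a bounded interval, so no explicit constant like $\tfrac{t_2-t_1}{4}$ needs to be exhibited, and (iii)$\Rightarrow$(i) is proved self-contained. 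Both arguments hinge on the point you flag at the end: the positive length $t_2-t_1>0$ is what makes either the overlap of the dilated intervals (your version) or the inclusion of two consecutive multiples (the paper's version) possible.
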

\begin{proof}
The implications $(i)\Rightarrow(ii)$ and $(ii)\Rightarrow(iii)$ are clear.

$(ii)\Rightarrow (i)$.  Fixed $t>0$, there exist $n\in \NN$ and $t'\in [0,t_1]$ such that $t=nt'$. Since $T(t')$ is an $m$-isometry, then any power of $T(t')$ is an $m$-isometry by \cite[Theorem~3.1]{BDM}. So, $T(t)$ is an $m$-isometry.

$(iii)\Rightarrow (i)$. Let us prove that $T(t)$ is an $m$-isometry for every $t\in (0,\frac{t_2 -t_1}{4}]$.

Choose $ k:= [\frac{t_1}{t}] +1$. Then $kt$ and $(k+1)t$ belong to $(t_1, t_2)$. Thus  $T(t)^{k}$ and $T(t)^{k+1}$ are $m$-isometries.
Hence $T(t)$ is an $m$-isometry by \cite[Corollary~3.7]{BDM}.
\end{proof}

\section{Embedding $m$-isometries  into $C_0$-semigroups}

We  are interested  to study: When can we embed an $m$-isometry into a continuous $C_0$-semigroup?, that is, given an $m$-isometry $T$, to find a $C_0$-semigroup $\{ T(t)\}_{t\geq 0}$ such that $T(1)=T$.

Recall that an isometry $T$ on a Hilbert space can be embedded   into a
$C_0$-semigroup if and only if $T$ is unitary or
$codim(R(T)) =\infty$, where $R(T)$ denotes the range of $T$. In this case, it is also possible to embed $T$  into an isometric $C_0$-semigroup \cite[Theorem~V.1.19]{E}.

Note that, if $T$ can be embedded into $C_0$-semigroup, then $dim (Ker T)$ and \linebreak $dim (Ker T^*)$ are zero or infinite \cite[Theorem~V.1.7]{E}.

\begin{proposition}
\begin{enumerate}
\item[(i)] All $m$-isometries in finite dimensional space are  embeddable on a $C_0$-group.

\item[(ii)] All  normal $m$-isometries are   embeddable.
\item[(iii)] The weighted forward shift $m$-isometries are not embeddable.
\end{enumerate}
\end{proposition}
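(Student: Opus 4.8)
The plan is to dispatch the three items independently, in each case first pinning down the structure of the operator and then either writing down an explicit logarithm or invoking the necessary condition for embeddability recalled just above the statement.

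For (i) I would first argue that a finite dimensional $m$-isometry $T$ is invertible. By Remark 1.1(3) its spectrum satisfies $\sigma(T)=\overline{\D}$ or $\sigma(T)\subseteq\partial\D$; since on a finite dimensional space $\sigma(T)$ is a finite set, it cannot fill the whole disk $\overline{\D}$, so $\sigma(T)\subseteq\partial\D$ and in particular $0\notin\sigma(T)$. Any invertible operator on a finite dimensional complex space admits a bounded logarithm (via the Jordan form): there is $A\in B(H)$ with $e^{A}=T$. Then $\{T(t):=e^{tA}\}_{t\in\R}$ is a uniformly continuous, hence $C_0$-, group with $T(1)=T$, which embeds $T$.

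For (ii) the key step is to show that a normal $m$-isometry is automatically unitary. Writing $E$ for the spectral measure of the normal operator $T$ and $\mu_x:=\langle E(\cdot)x,x\rangle$, the spectral theorem gives $\|T^kx\|^2=\int|z|^{2k}\,d\mu_x(z)$, so the defining identity of an $m$-isometry becomes
$$
\sum_{k=0}^m\binom{m}{k}(-1)^{m-k}\|T^kx\|^2=\int(|z|^2-1)^m\,d\mu_x(z)=0
$$
for every $x\in H$. Localising $\mu_x$ near an arbitrary point of $\sigma(T)$, by choosing $x$ in the range of $E$ on a small ball, forces $(|z|^2-1)^m$ to vanish on $\sigma(T)$, i.e. $\sigma(T)\subseteq\partial\D$; a normal operator with spectrum on the unit circle is unitary. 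Finally any unitary $U$ is embeddable: by the spectral theorem $U=e^{iB}$ for a bounded self-adjoint $B$, and $\{e^{itB}\}_{t\in\R}$ is a unitary $C_0$-group taking the value $U$ at $t=1$.

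For (iii) I would simply compute the defect of the weighted forward shift. If $Te_n=w_ne_{n+1}$ on an orthonormal basis $\{e_n\}_{n\ge0}$ with nonzero weights (which holds for an $m$-isometry, these being bounded below and hence injective), then $T^*e_0=0$ and $T^*e_{n+1}=\overline{w_n}e_n$, so $\operatorname{Ker}T^*=\spa\{e_0\}$ has dimension $1$. Since $1$ is neither $0$ nor $\infty$, the necessary condition recalled above (\cite[Theorem~V.1.7]{E}) shows that $T$ cannot be embedded into a $C_0$-semigroup. The routine verifications are the existence of the logarithm in (i) and the bookkeeping of the spectral integral in (ii); the only genuinely delicate point is the localisation argument in (ii) establishing that a normal $m$-isometry is unitary, where one must control the sign of $(|z|^2-1)^m$ off the unit circle — this is overcome precisely because $(|z|^2-1)^m$ keeps a constant nonzero sign on each of the regions $|z|<1$ and $|z|>1$.
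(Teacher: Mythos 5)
Your proof is correct. For items (i) and (iii) it is essentially the paper's argument with the citations replaced by direct verifications: in (i) the paper invokes the criterion that a finite-dimensional operator embeds into a group if and only if $0\notin\sigma(T)$ (which you reprove via the Jordan-form logarithm), and in (iii) the paper cites references for $\dim \operatorname{Ker}(T^*)=1$ (which you compute by hand from the weights being nonzero) before applying the same necessary condition from Eisner's Theorem~V.1.7. The genuine divergence is in (ii): the paper merely asserts that a normal $m$-isometry is invertible and then outsources the embedding to Eisner's Theorem~V.1.14 on invertible normal operators, whereas you prove the stronger structural fact that a normal $m$-isometry is \emph{unitary}, via the spectral identity $\sum_{k=0}^m\binom{m}{k}(-1)^{m-k}\|T^kx\|^2=\int(|z|^2-1)^m\,d\mu_x(z)$ together with localization of $\mu_x$ on a ball around a hypothetical spectral point off the circle (where $(|z|^2-1)^m$ has constant nonzero sign and is bounded away from zero), and then embed the unitary explicitly as $e^{itB}$ with $B$ bounded self-adjoint. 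Your route is more self-contained and in fact fills a gap the paper leaves implicit — why a normal $m$-isometry is invertible in the first place (the paper's quickest justification would be $\sigma(T)=\sigma_{ap}(T)\subseteq\partial\D$ for normal operators); the paper's version is shorter but rests entirely on the two quoted results from Eisner's book.
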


\begin{proof}

{\it (i)} On a finite-dimensional space an operator can be
 embeddable if and only if its spectrum does not contain 0 (see, \cite[page~166]{E}). Moreover, on finite-dimensional space the spectrum of $m$-isometries is contained in the  unit circle. Hence any  $m$-isometry on finite dimensional space is embeddable in a group.

{\it (ii)} If $T$ is a normal $m$-isometry, then $T$ is invertible and  by \cite[Theorem~V.1.14]{E} $T$ is embeddable.

{\it (iii)} Assume that $T$ is a weighted forward shift $m$-isometry. Then $dim Ker(T^*)=1$, (\cite{Al} \& \cite{BMNe}).
 \end{proof}

\bigskip

Let $M_z$ be the multiplication operator on the Dirichlet space $D(\mu)$ for some finite non-negative Borel measure on $\Bbb T$ defined by
$$
D(\mu):=\{ f:\mathbb{D} \rightarrow \mathbb{C} \; \mbox{ analytic } : \;\; \int_{\mathbb{D}}|f'(z)|^2\varphi_{\mu} (z) dA(z)<\infty\} \;,
$$
where $A$ denotes the normalized Lebesgue area measure on $\mathbb{D}$ and $\varphi_{\mu}$ is defined by $$
\varphi _{\mu}(z):= \frac{1}{2\pi}\int _{[0,2\pi)}\frac{1-|z|^2}{|e^{it}-z|^2} d\mu (t)\;,
$$
for $z\in \mathbb{D}$.

\begin{proposition}
 $M_z$ on $D(\mu )$ can not be embedded into $C_0$-semigroup.
\end{proposition}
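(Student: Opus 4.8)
\emph{The plan} is to invoke the necessary condition for embeddability recorded above: if $M_z$ could be embedded into a $C_0$-semigroup, then both $\dim(Ker\,M_z)$ and $\dim(Ker\,M_z^*)$ would have to be either $0$ or $\infty$, by \cite[Theorem~V.1.7]{E}. Since $M_z$ is injective (if $zf\equiv 0$ then $f\equiv 0$), the first quantity is $0$ and causes no trouble; everything therefore reduces to showing that
$$
\dim(Ker\,M_z^*)=1 ,
$$
which is incompatible with embeddability, exactly as for the weighted forward shift $m$-isometries of the previous proposition.

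First I would record the structural features of $D(\mu)$ that I need: it is a reproducing kernel Hilbert space of analytic functions on $\D$, the polynomials form a dense subspace, $M_z$ acts boundedly on it (indeed it is a $2$-isometry), and the evaluation functional $f\mapsto f(0)$ is bounded and nonzero. I would then compute $Ker\,M_z^*$ directly. A vector $u\in D(\mu)$ satisfies $M_z^*u=0$ if and only if $\langle zf,u\rangle=0$ for every $f\in D(\mu)$; testing against the monomials $f=z^n$ shows that this is equivalent to $u\perp z^m$ for all $m\ge 1$, that is, $u\in\bigl(\overline{\spa}\{z^m:m\ge 1\}\bigr)^{\perp}$. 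Because the polynomials are dense and $f\mapsto f(0)$ is bounded, one has $\overline{\spa}\{z^m:m\ge 1\}=\{g\in D(\mu):g(0)=0\}$, the kernel of a nonzero bounded functional; hence it has codimension one, and its orthogonal complement is the one-dimensional space spanned by the reproducing kernel $k_0$ at the origin. Thus $Ker\,M_z^*=\C\,k_0$ and $\dim(Ker\,M_z^*)=1$.

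Combining the two steps, $\dim(Ker\,M_z^*)=1\notin\{0,\infty\}$, so $M_z$ on $D(\mu)$ cannot be embedded into a $C_0$-semigroup. The one point that needs care --- and which I regard as the main obstacle --- is the identity $\overline{\spa}\{z^m:m\ge 1\}=\{g\in D(\mu):g(0)=0\}$ together with the assertion that this subspace has codimension exactly one; both rest on the reproducing kernel structure of $D(\mu)$, namely on the boundedness and nonvanishing of evaluation at $0$ and on the density of the polynomials. Everything else is the routine computation of the cokernel of an analytic shift.
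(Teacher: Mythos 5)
Your proof is correct and follows essentially the same route as the paper: both reduce the statement to the obstruction that embeddability into a $C_0$-semigroup forces $\dim(\ker M_z^*)\in\{0,\infty\}$ (Eisner, Theorem V.1.7) and then use the fact that $\dim(\ker M_z^*)=1$. The only difference is that the paper obtains this kernel fact by citing Richter's theorem directly, whereas you derive it by hand from the reproducing-kernel structure and the density of polynomials in $D(\mu)$ --- a valid, self-contained alternative, since those structural facts are themselves standard parts of the theory of $D(\mu)$.
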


\begin{proof}

By  Richter's Theorem~\cite{Richter}, $M_z$ is an analytic $2$-isometry with  $dim (Ker T^*) =1$, then $M_z$ can not be embedded into $C_0$-semigroup.
\end{proof}

\ \par

Let $Y$ be an infinite dimensional Hilbert space. Denote by  $\ell^2_Y$  the Hilbert space of all vector sequences $(h_n)_{n=1}^{\infty}$ such that $\sum_{n\geq 1} \|h_n\|^2 <\infty$ with the standard inner product.
If $( W_n)_{n=1}^{\infty} \subset B(Y)$ is an uniformly bounded sequence of operators, then the operator $S_W \in B( \ell^2_Y)$ defined by
$$
S_W( h_1, h_2, \cdots ) := (0, W_1h_1,  W_2h_2, \cdots )\;,
$$
for any  $(h_1, h_2, \cdots )\in  \ell^2_Y$,
is called the   \emph{operator valued unilateral forward weighted shifts} with weights $W:=(W_n)_{n\geq 1}$.

Following some ideas of \cite[Proposition~V.1.18]{E},  we obtain the next result.

\begin{lemma}\label{vufs}
Let $S_W$ be the  valued unilateral forward weighted shift operator with weights $W=(W_n)_{n\geq 1}$ on $\ell ^2_Y$ where  $Y$ be an infinite dimensional Hilbert space. Then $S_W$ can be embedded into  $C_0$-semigroup.
\end{lemma}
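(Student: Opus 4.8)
The plan is to realize $S_W$ as the time-one operator of a suitable right-translation semigroup on a vector-valued $L^2$-space over $\mathbb{R}_+:=[0,\infty)$, in the spirit of the construction of \cite[Proposition~V.1.18]{E} for the unweighted forward shift of infinite multiplicity. The starting point is that $\dim Y=\infty$, so that $\ell^2_Y$ may be identified unitarily with $L^2(\mathbb{R}_+,Y)$: since $Y\cong L^2([0,1),Y)$, stacking the unit subintervals yields a unitary $U\colon \ell^2_Y\to L^2(\mathbb{R}_+,Y)$ sending the $n$-th coordinate copy of $Y$ onto $L^2([n-1,n),Y)$. Under $U$ the \emph{unweighted} forward shift corresponds exactly to the right translation $(T_0(t)f)(s)=f(s-t)$ (with $f\equiv 0$ on $[0,t)$), a $C_0$-semigroup of isometries with generator $-\tfrac{d}{ds}$. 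This already exhibits the infinite-multiplicity shift as embeddable, consistently with the necessary condition $\dim\ker S_W^*=\dim Y=\infty$ coming from \cite[Theorem~V.1.7]{E}.

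To account for the weights I would twist the translation by a multiplicative operator cocycle. For $s\ge r\ge 0$ define $\Phi(s,r)\in B(Y)$ as the ordered product of those $W_n$ whose index is crossed when passing from $r$ to $s$, so that $\Phi(s,s)=I$ and $\Phi(s,r)\Phi(r,u)=\Phi(s,u)$, and set
$$
(T(t)f)(s):=\begin{cases}\Phi(s,s-t)\,f(s-t), & s\ge t,\\ 0, & 0\le s<t.\end{cases}
$$
The cocycle identity is precisely what makes $\{T(t)\}_{t\ge 0}$ satisfy $T(t+\tau)=T(t)T(\tau)$ together with $T(0)=I$. The hypothesis $\sup_n\|W_n\|<\infty$ guarantees that on each bounded time interval $\Phi$ is a product of boundedly many uniformly bounded factors, whence $\|\Phi(s,r)\|$ is locally bounded; this makes each $T(t)$ a bounded operator on $L^2(\mathbb{R}_+,Y)$ with locally bounded norm. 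Strong continuity of $\{T(t)\}$ then follows from that of pure translation together with $\Phi(s,s-t)\to I$ as $t\to 0^+$ for a.e.\ $s$, so that $\{T(t)\}_{t\ge 0}$ is a $C_0$-semigroup.

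The crux --- and the step I expect to be the main obstacle --- is to verify that the one-step evaluation of the cocycle reproduces $S_W$ exactly, that is $U S_W U^{-1}=T(1)$. Blockwise, $T(1)$ carries $L^2([n-1,n),Y)$ into $L^2([n,n+1),Y)$ by translation followed by the pointwise application of $W_n$, and under $U$ this must be matched with the defining action $h_n\mapsto W_n h_n$ of $S_W$. The delicate point is that passing to a continuous parameter introduces internal (unit-interval) degrees of freedom in each block, so one has to arrange the identification $Y\cong L^2([0,1),Y)$ and the placement of the weights inside the cocycle in such a way that these extra degrees of freedom are absorbed by the infinite multiplicity of $Y$ without altering the operator. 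This is exactly where $\dim Y=\infty$ is indispensable --- just as the unilateral shift of finite multiplicity fails to embed by \cite[Theorem~V.1.19]{E} --- and carrying out this matching precisely, together with checking strong continuity across the integer points at which $\Phi$ jumps, is the heart of the argument; embeddability of $S_W$ then transfers back from $T(1)$ to $S_W$ via the unitary $U$.
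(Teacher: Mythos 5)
Your strategy coincides with the paper's own proof of this lemma: identify $\ell^2_Y$ with $L^2(\mathbb{R}^+,Y)$ (using $\dim Y=\infty$), run right translation, and apply $W_n$ whenever the flow crosses the integer $n$; the paper writes this out with indicator functions and checks the semigroup law by cases, while your cocycle $\Phi$ packages the same verification more cleanly. The parts you actually carry out are fine: the semigroup law from the cocycle identity $\Phi(s,r)\Phi(r,u)=\Phi(s,u)$, local boundedness from $\sup_n\|W_n\|<\infty$, and strong continuity (the integer ``jumps'' of $\Phi$ are harmless, since for fixed $f$ they contribute only an integral over $\bigcup_n[n,n+t)$, a set whose measure tends to $0$ as $t\to0^+$).

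The genuine gap is the step you yourself call the crux and then leave undone: proving $US_WU^{-1}=T(1)$. This is not a matter of arranging the identification $Y\cong L^2([0,1),Y)$ cleverly; with the cocycle as defined the identity is false in general. On the block $L^2([n-1,n),Y)\cong L^2([0,1))\otimes Y$, the operator $T(1)$ acts by translation followed by \emph{pointwise} application of $W_n$, i.e.\ as $I_{L^2([0,1))}\otimes W_n$; consequently $T(1)$ is unitarily equivalent to $S_W\otimes I_{L^2([0,1))}\cong\bigoplus_{k\geq 1}S_W$, the infinite ampliation of $S_W$. So $US_WU^{-1}=T(1)$ would force $S_W\cong S_W\otimes I_{L^2([0,1))}$, which fails for general uniformly bounded operator weights: take $Y=\ell^2$, $W_1=\operatorname{diag}(2,1,1,\dots)$ and $W_n=I$ for $n\geq 2$; then the eigenvalue $4$ of $S_W^*S_W=\bigoplus_n W_n^*W_n$ has multiplicity one, while it has infinite multiplicity for $(S_W\otimes I)^*(S_W\otimes I)=S_W^*S_W\otimes I$, so no unitary --- in particular no choice of the identification $Y\cong L^2([0,1),Y)$ --- can match the two. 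The construction does succeed when every $W_n$ is a scalar multiple of a unitary, or more generally whenever $S_W$ is itself unitarily equivalent to an infinite ampliation, but not for arbitrary weights. You should know that the paper's proof has exactly the same lacuna: its opening assertion that $S_W$ ``is unitarily equivalent to the valued unilateral forward weighted shift operator on $\ell^2_{L^2([0,1),Y)}$'' (with pointwise weights) is precisely this unproved, and in general false, claim. Your proposal therefore reproduces the paper's argument, gap included; the difference is that you flagged the step instead of asserting it.
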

\begin{proof}
The operator  $S_W$ is unitarily equivalent to the   valued unilateral forward weighted shift operator on $\ell ^2_{L^2([0,1),Y)}$. Moreover,  $\ell ^2_{L^2([0,1),Y)}$ can be identified with $L^2(\Bbb R^+,Y)$ by
$$
(f_1,f_2 \cdots ) \rightarrow ( s\rightarrow f_n(s-n), \;\; s \in [n, n+1))\;.
$$

For $0<t\le 1$, define the following family of operators on $L^2(\mathbb{R}^+,Y)$ by
$$
(T(t)f)(s):=\left\{
\begin{array}{ll}
0 & s<t\\[0.7pc]
f(s-t) & n-1+t\leq s<n\\[0.7pc]
W_nf(s-t) & n\leq s<n+t \;,
\end{array}
\right.
$$
that is,
$$
(T(t)f)(s)=\left\{
\begin{array}{ll}
0 & s<t\\[0.7pc]
\displaystyle \sum_{n\geq 1} \left( f(s-t)\chi _{[n-1+t,n)}(s)+W_n f(s-t)\chi_{[n,n+t)}(s)\right)  & s\geq t \;.
\end{array}
\right.
$$
In particular, for $t=1$, we have that
$$
(T(1)f)(s)
=\left\{
\begin{array}{ll}
0 & s<1\\[0.7pc]
\displaystyle \sum_{n\geq 1} W_nf(s-1)\chi _{[n,n+1)}(s) & s\geq 1 \;.
\end{array}
\right.
$$
Hence $T(1)$ is unitarily equivalent to $S_W$.

For $t>1$,  define $T(t):=T^{[t]}(1)T(t-[t])$, where $[t]$ denotes the  greatest integer less than or equal to $t$.

Let us prove that $T(t)$ is a $C_0$-semigroup. Given any $f\in L^2(\mathbb{R}^+,Y)$, we have that
$$
\lim_{t\to 0^+}T(t)f(s)=\sum_{n\geq 1} \chi_{[n-1,n)}f(s)=f(s)\;,
$$
in the strong topology of $L^2(\mathbb{R}^+,Y)$.

Let $t,t'\in [0,1)$. Then  $T(t)T(t')f(s)=T(t)\tilde{f}(s)$, where
$$
\tilde{f}(s):=T(t')f(s)=\left\{
\begin{array}{ll}
0 & s<t'\\[0.7pc]
\displaystyle \sum_{n\geq 1} (\chi_{[n-1+t',n)}(s)+W_n\chi_{[n,n+t')}(s))f(s-t') & s\geq t'\;.
\end{array}
 \right.
$$
Then
$$
T(t)T(t')f(s) 
$$
\begin{eqnarray*}
& =& \left\{
\begin{array}{ll}
0 & s<t+t'\\[0.7pc]
\displaystyle \sum_{m\geq 1} \Biggl( 
\displaystyle \sum _{n\geq 1} \left( \chi_{[n-1+t',n)}(s-t)+W_n\chi_{[n,n+t')}(s-t)
\right)
\chi_{[m-1+t,m)}(s)+  &\\
W_m \displaystyle  \sum_{n\geq 1} \left( \chi _{[n-1+t',n)}(s-t)+W_n\chi_{[n,n+t')}(s-t)\right) \\
\chi_{[m,m+t)}(s)\Biggr)f(s-t'-t) 
 & s\geq t+t'
\end{array}
 \right.
\end{eqnarray*}
\begin{eqnarray}\label{v}
& =& \left\{
\begin{array}{ll}
0 & s<t+t'\\[0.7pc]
\displaystyle\sum_{m\geq 1} \Biggl\{  \displaystyle \sum_{n\geq 1}\left(\chi_{[n-1+t'+t,n+t)}(s)+W_n\chi_{[n+t,n+t'+t)}(s)\right) \chi_{[m-1+t,m)}(s) + & \\[0.7pc]
 W_m\displaystyle  \sum_{n\geq 1} \left( \chi_{[n-1+t'+t,n+t)}(s)+W_n\chi_{[n+t,n+t'+t)}(s)\right)\\[0.7pc]
\chi_{[m,m+t)}(s)\Biggr\}f(s-t'-t) & s\geq t+t'
\end{array}
 \right.
 \end{eqnarray}

If $t'+t<1$, then (\ref{v}) is given by
\begin{eqnarray*}
T(t)T(t')f(s)& =& \left\{
\begin{array}{ll}
0 & s<t'+t\\
\displaystyle \sum_{n\geq 1} \left( \chi_{[n-1+t'+t,n)}(s)+W_n\chi _{[n,n+t'+t)}(s)\right) f(s-t'-t)  & s\geq t'+t
\end{array}
\right. \\
&=& T(t+t')f(s)\;.
\end{eqnarray*}

Denote $t'':=t'+t-[t'+t]$. If $t'+t>1$, then $t''=t'+t-1$ and
\begin{eqnarray*}
T(t+t')f(s)& =& T^{[t+t']}(1)(T(t+t'-[t+t']))f(s)\\[0.7pc]
&=& T(1)T(t+t'-1)f(s)=T(1)T(t'')f(s)\\[0.7pc]
&=& \left\{
\begin{array}{ll}
0 & s<t''\\[0.7pc]
T(1) \displaystyle \sum_{n\geq 1} (\chi_{[n-1+t'',n)}(s)+W_n\chi_{[n,n+t'')}(s)) f(s-t'')& s\geq t''
\end{array}
 \right.\\[0.7pc]
 &=& \left\{
\begin{array}{ll}
0 & s-t''<1\\[0.7pc]
\displaystyle \sum_{m\geq 1} W_m \sum_{n\geq 1} \left( \chi _{[n-1+t'',n)}(s-1)+ W_n\chi_{[n,n+t'')}(s-1) \right)\\
 \chi _{[m, m+1)}(s) f(s-t''-1) & s-t''\geq 1
\end{array}
\right. \\[0.7pc]
&=& \left\{
\begin{array}{ll}
0 & s-t''<1\\[0.7pc]
\displaystyle \sum_{m\geq 1} W_m \displaystyle \sum  _{n\geq 1} \left( \chi _{[n+t'',n+1) }(s)+ W_n\chi _{[n+1,n+t''+1)}(s) \right)\\[0.7pc]
\chi _{[m,m+1)}(s) f(s-t''-1) & s\geq 1+t''
\end{array}
\right.
\end{eqnarray*}
\begin{eqnarray*}
&=&
\left\{
\begin{array}{ll}
0 & s<t+t'\\[0.7pc]
\displaystyle  \sum_{n\geq 1} \left( W_n\chi_{[n+t'',n+1)}(s)+W_{n+1}W_n\chi_{[n+1,n+t''+1)}(s)\right)f(s-t'-t) & s\geq t+t' \;.
\end{array}
\right.
\end{eqnarray*}

By other hand, by (\ref{v}) we have that
$$
T(t)T(t')f(s)=
$$
\begin{eqnarray*}
\left\{
\begin{array}{ll}
0 & s<t+t'\\[0.7pc]
 \displaystyle  \sum_{n\geq 1} \left( W_n\chi_{[n+t'+t-1,n+1)}(s)+W_{n+1}W_n\chi_{[n+1,n+t'+t)}(s)\right)f(s-t'-t) & s\geq t+t'
\end{array}
 \right.
 \end{eqnarray*}
This completes the proof.
\end{proof}

We say that an operator $T\in B(H)$ satisfies the \emph{kernel condition} if
$$
T^*T(Ker T^*)\subset Ker T^*\;.
$$

\begin{corollary}
A  non-unitary $2$-isometry on a Hilbert space satisfying the kernel condition can be embedded into $C_0$-semigroup if and only if $dim (KerT^*)=\infty$.
\end{corollary}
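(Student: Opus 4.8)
The plan is to prove the two implications separately, reserving Lemma~\ref{vufs} for the sufficiency and an invertibility argument for the necessity.

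For the necessity, suppose $T$ is embeddable into a $C_0$-semigroup. By \cite[Theorem~V.1.7]{E} the number $\dim(\ker T^*)$ is either $0$ or $\infty$, so it suffices to exclude $\ker T^*=\{0\}$. Assume $\ker T^*=\{0\}$, i.e.\ $R(T)$ is dense. Since $T$ is a $2$-isometry, the function $n\mapsto\|T^nx\|^2$ is affine, namely $\|T^nx\|^2=\|x\|^2+n(\|Tx\|^2-\|x\|^2)$ with non-negative slope, so $\|Tx\|\ge\|x\|$; thus $T$ is bounded below, $R(T)$ is closed, and therefore $T$ is surjective and invertible. Applying the $2$-isometric identity $\|T^2y\|^2-2\|Ty\|^2+\|y\|^2=0$ to $y=T^{-k}x$ shows that this affine formula persists for all $n\in\Z$; letting $n\to-\infty$ and using $\|T^nx\|^2\ge0$ forces the slope to vanish, so $\|Tx\|=\|x\|$ and $T$ is unitary. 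This contradicts the non-unitarity of $T$, and hence $\dim(\ker T^*)=\infty$.

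For the sufficiency, I would appeal to the model for non-unitary $2$-isometries satisfying the kernel condition. Set $\mathcal E:=\ker T^*$ and $\Delta:=T^*T-I\ge0$; the $2$-isometry relation gives $T^*\Delta T=\Delta$, while the kernel condition $T^*T\,\mathcal E\subset\mathcal E$ reads $\Delta\mathcal E\subset\mathcal E$. From $\langle T^ne,f\rangle=\langle T^{n-1}e,T^*f\rangle=0$ for $f\in\mathcal E$ and $n\ge1$, together with these two relations, one obtains by induction that the subspaces $\{T^n\mathcal E\}_{n\ge0}$ are mutually orthogonal. Consequently $T$ is unitarily equivalent to $U\oplus S_W$, where $U$ is a (possibly absent) unitary operator and $S_W$ is an operator-valued unilateral forward weighted shift on $\ell^2_{\mathcal E}$ with uniformly bounded weights. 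Because $\mathcal E=\ker T^*$ is infinite dimensional, Lemma~\ref{vufs} embeds $S_W$ into a $C_0$-semigroup, and $U$ embeds into the unitary group $t\mapsto e^{itB}$ for a bounded self-adjoint $B$ with $e^{iB}=U$; the direct sum of these two semigroups embeds $T$, as desired.

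The hard part will be the model step, i.e.\ establishing the unitary equivalence $T\cong U\oplus S_W$ with multiplicity space exactly $\ker T^*$. The delicate point is the orthogonality induction: one must combine $T^*\Delta T=\Delta$ with $\Delta\mathcal E\subset\mathcal E$ to keep pushing the defect $\Delta$ back into $\mathcal E$, thereby forcing $T^n\mathcal E\perp T^m\mathcal E$ for $n\ne m$, and then check that the induced operator weights $W_n$ are uniformly bounded, so that the analytic part of $T$ genuinely has the form treated in Lemma~\ref{vufs}. Once this model is secured, both directions follow with no further difficulty.
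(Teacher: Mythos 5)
Your proposal is correct and takes essentially the same route as the paper: the sufficiency rests on the decomposition $T\cong U\oplus S_W$ with multiplicity space $\ker T^*$ --- which is precisely \cite[Theorem~3.8]{ACJS}, cited by the paper, so you may simply quote it instead of re-deriving the ``hard model step'' --- followed by Lemma~\ref{vufs} for the shift part and the embeddability of unitaries (\cite[Theorem~V.1.19]{E}) for $U$. Your necessity argument (excluding $\ker T^*=\{0\}$ by noting that a $2$-isometry is expansive, hence would be invertible and therefore unitary) is sound and is in fact spelled out more completely than in the paper, which handles that direction only implicitly via \cite[Theorem~V.1.7]{E}.
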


\begin{proof}
If $T$ is a non-unitary $2$-isometry on a Hilbert space satisfying the kernel condition as consequence of  \cite[Theorem~3.8]{ACJS}, we obtain that  $T\cong U \oplus W$ with $U$ unitary and $W$ a operator valued unilateral forward weighted shifts operator  in $\ell_{M}^2$ with $dim M = dim (KerT^*)$. Thus by \cite[Theorem~V.1.19]{E} and Lemma \ref{vufs}, $T$ can be embedded into $C_0$-semigroup.
\end{proof}

By the Wold Decomposition Theorem for $2$-isometries (see \cite{Olo}), all $2$-isometries can be decomposed as directed sum of  unitary operator and an analytic $2$-isometries.

\begin{question} An analytic $2$-isometry on a Hilbert space can be embedded into $C_0$-semigroup if and only if $dim (KerT^*)=\infty$?
\end{question}

\section{Translation semigroups of $m$-isometries}

In this section,  we discuss examples of semigroups of $m$-isometries.

\begin{definition} By a \emph{right admissible weighted function} in $(0,\infty)$, we mean
a measurable  function $\rho:(0,\infty)\rightarrow \Bbb{R}$
satisfying the following conditions:
\begin{enumerate}
\item $\rho(\tau)>0$ for all $\tau \in (0,\infty)$,

\item there exist constants $M\geq 1$ and $\omega\in \Bbb{R}$
such that $ \rho(t+\tau)\leq Me^{\omega t}\rho(\tau)$ holds for all $\tau\in (0,\infty)$
and all $t >0$.
\end{enumerate}
\end{definition}

For a right admissible weighted function, we define the \emph{weighted space} $L^2(\Bbb R^+, \rho)$ of measurable  functions $f:\Bbb R^+\rightarrow \Bbb C$ such that
$$
\|f\|_{L^2(\Bbb R^+,\; \rho)} = \int_0^{\infty}|f(s)|^2\rho(s)ds<\infty \;.
$$
Then  the \emph{right translation semigroup} $(S(t))_{t\ge 0}$  given for $t \ge 0$ and $f \in L^2(\Bbb R^+, \rho)$ by
$$
(S(t)f)(s):=\left\{\begin{array}{ccl}
   0 &\mbox{ if }  s\le t \\[0.7pc]
   f(s-t)&\mbox{ if }  s> t \;,
  \end{array}\right.
$$
is  a strongly   continuous semigroup and  straightforward computation shows that for $s,t \ge 0$ and $f \in L^2(\Bbb R^+, \rho)$
$$
(S^*(t)f)(s)= \frac{\rho(s+t)}{\rho(s)}f(s+t)\;.
$$

\begin{theorem}\label{Ths}
Let $\{S(t)\}_{t\ge 0}$ be the  right translation  $C_0$-semigroup on $L^2(\Bbb R^+, \rho) $ with $\rho$ a continuous function. The semigroup  $\{S(t)\}_{t\ge 0}$ is an $m$-isometry for every $t> 0$ if and only if $\rho(s)$ is a  polynomial of degree less than $m$.
\end{theorem}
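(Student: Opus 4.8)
The plan is to reduce the statement to the polynomial-growth criterion of Theorem~\ref{Thc} by computing the norm $\|S(t)f\|^2$ explicitly. A change of variables gives, for every $f\in L^2(\mathbb{R}^+,\rho)$ and $t\ge 0$,
$$
\|S(t)f\|^2=\int_t^\infty |f(s-t)|^2\rho(s)\,ds=\int_0^\infty |f(u)|^2\rho(u+t)\,du.
$$
By Theorem~\ref{Thc}, $S(t)$ is an $m$-isometry for every $t$ if and only if the map $t\mapsto\|S(t)f\|^2$ is a polynomial of degree less than $m$ for each $f$; so it suffices to show that this property holds for all $f$ exactly when $\rho$ is a polynomial of degree less than $m$.

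For the implication that a polynomial weight produces an $m$-isometric semigroup, I would write $\rho(u+t)=\sum_{k=0}^{m-1}\frac{t^k}{k!}\rho^{(k)}(u)$, an exact finite Taylor expansion since $\deg\rho<m$, and integrate term by term to obtain $\|S(t)f\|^2=\sum_{k=0}^{m-1}\frac{t^k}{k!}\int_0^\infty|f(u)|^2\rho^{(k)}(u)\,du$, a polynomial in $t$ of degree less than $m$. The only thing to verify is that each coefficient integral converges. Near infinity this holds because positivity of $\rho$ forces its leading coefficient to be positive, so $\rho$ dominates every lower-degree $\rho^{(k)}$ up to a constant and the tail is controlled by $\int|f|^2\rho<\infty$; near the origin one uses that admissibility of the weight forces $\rho(0)>0$, so $\rho$ is bounded below by a positive constant on a neighbourhood of $0$ and again dominates the bounded polynomials $\rho^{(k)}$ there. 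Theorem~\ref{Thc} then yields the conclusion.

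For the converse I would probe $\rho$ with indicator test functions. For $0<a<b$ the function $\chi_{[a,b]}$ lies in $L^2(\mathbb{R}^+,\rho)$ because $\rho$ is continuous, and setting $R(x):=\int_0^x\rho$ (so that $R\in C^1$ with $R'=\rho$) the computation above reads $\|S(t)\chi_{[a,b]}\|^2=R(b+t)-R(a+t)$, which by Theorem~\ref{Thc} is a polynomial in $t$ of degree less than $m$. Writing $c:=b-a$ and $x:=a+t$ and letting $a$ range over $(0,\infty)$, I conclude that for every fixed $c>0$ the difference $Q_c(x):=R(x+c)-R(x)$ is a single polynomial in $x$ of degree less than $m$ on $(0,\infty)$. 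The key step is to recover $\rho$ by letting $c\to 0^+$: fixing $m$ distinct nodes $0<x_0<\dots<x_{m-1}$ and inverting the Vandermonde matrix $V=(x_i^{\,j})_{i,j}$ shows that each coefficient $a_j(c)$ of $Q_c$ is a linear combination of the values $R(x_i+c)-R(x_i)$, hence is $C^1$ in $c$ with $a_j(0)=0$ and $a_j'(0)=\sum_i(V^{-1})_{ji}\rho(x_i)$. Therefore, for each fixed $x>0$,
$$
\rho(x)=\lim_{c\to 0^+}\frac{R(x+c)-R(x)}{c}=\lim_{c\to 0^+}\sum_{j=0}^{m-1}\frac{a_j(c)}{c}\,x^j=\sum_{j=0}^{m-1}a_j'(0)\,x^j,
$$
so $\rho$ is a polynomial of degree less than $m$.

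I expect the converse to be the main obstacle, and within it the delicate point is that $\rho$ is assumed only continuous, so one cannot iterate finite differences of $\rho$ directly. Passing to the $C^1$ antiderivative $R$ is what makes the $c\to 0^+$ limit legitimate, and using all step sizes $c>0$ simultaneously is essential: for a single $c$ the difference relation $Q_c(x)=R(x+c)-R(x)$ would only constrain $\rho$ up to a $c$-periodic perturbation, whereas letting $c\to 0^+$ pins $\rho$ down as a genuine polynomial. The forward direction is routine once the convergence of the coefficient integrals is justified through the admissibility of $\rho$.
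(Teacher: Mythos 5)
Your proof is correct, but it takes a genuinely different route from the paper's. The paper does not invoke Theorem~\ref{Thc} at all: it writes the $m$-isometry condition as the integral identity $\int_0^\infty\bigl(\sum_{k=0}^m\binom{m}{k}(-1)^{m-k}\rho(s+kt)/\rho(s)\bigr)|f(s)|^2\rho(s)\,ds=0$, shows by a localization argument (if the bracketed function $g$ were nonzero somewhere, continuity of $\rho$ gives an interval $I$ with $g>M>0$ there, and the test function $f_1=\rho^{-1/2}\chi_I$ yields a contradiction) that $\sum_{k=0}^m\binom{m}{k}(-1)^{m-k}\rho(s+kt)=0$ for all $s,t\ge 0$, and then applies the Montel--Kuczma theorem \cite[Theorem~13.5]{Ku1} directly to this Fr\'echet functional equation to conclude that the continuous function $\rho$ is a polynomial of degree less than $m$; the converse is immediate in that setting, because the $m$-th finite differences of such a polynomial vanish identically and each $\|S^k(t)f\|^2$ is finite by boundedness of $S(kt)$. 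You instead funnel everything through Theorem~\ref{Thc}(i)$\Leftrightarrow$(ii): the forward direction by exact Taylor expansion of $\rho(u+t)$ (with the domination estimates, and the correct observation that admissibility forces $\rho(0)>0$, to justify convergence of the coefficient integrals --- a check the paper's route avoids entirely), and the converse by probing with $\chi_{[a,b]}$, passing to the $C^1$ antiderivative $R$, and recovering polynomiality of $\rho$ via the Vandermonde/differentiation-at-zero argument, which is a sound elementary replacement for a direct appeal to the Fr\'echet-equation theorem and correctly copes with $\rho$ being merely continuous. What the paper's route buys is brevity. What yours buys is that it actually carries out the program announced in the paper's introduction (deducing the translation-semigroup result from characterization (ii) of Theorem~\ref{Thc}), though the Kuczma machinery is displaced rather than avoided, since the proof of Theorem~\ref{Thc}(i)$\Rightarrow$(ii) itself rests on \cite[Theorem~13.7]{Ku1}.
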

\begin{proof}
By definition,  $S(t)$ is an $m$-isometry for every $t> 0$ if
 $$
 \sum _{k=0}^m { m\choose k}(-1)^{m-k} \|S^{k}(t)f\|^2=0\;,
$$
for all $t\geq  0$  and $f \in L^2(\Bbb R^+, \rho) $.
 That is,
\begin{equation}\label{ecn}
\int _0^{\infty}\left( \sum _{k=0}^m { m\choose k} (-1)^{m-k}\frac{\rho(s+kt)}{\rho(s)}\right) |f(s)|^2\rho(s)ds=0\;,
\end{equation}
for all $t\geq  0$  and $ f \in L^2(\Bbb R^+, \rho)$.
Fixed $t\geq 0$ , define
$$
g(s):= \sum_{k=0}^m {m \choose k} (-1)^{m-k} \frac{\rho (s+kt)}{\rho (s)} \;.
$$
If $g(s)\neq 0$, we can suppose without lost of generality that, $g(s)>0$, for some $s\geq 0$. Then by continuity of $\rho $ and $\rho (\tau) >0 $ for all $\tau \geq 0$,  we obtain that there exits an interval $I\subset \mathbb{R}^+$, with finite measure,  such that there exists $M>0$ such that $g(s)>M$ for all $s\in I$.

Let $f_1(s):= \frac{1}{\sqrt{\rho (s)}}\chi_I (s) \in L^2(\mathbb{R}^+,\rho )$. Then by (\ref{ecn}) we have that
$$
0=\int _0^\infty g(s)|f_1(s)|^2\rho (s) ds=\int _Ig(s) ds>M\mu (I)\;,
$$
which it is an absurd. So,
$$
\sum_{k=0}^m{m\choose k} (-1)^{m-k} \frac{\rho (s+kt)}{\rho (s)} =0 \;,
$$
for all $s\geq 0$  and $t\geq 0$. Then by \cite[Theorem~13.5]{Ku1}, the function $\rho $ is a polynomial of degree less than $m$.
\end{proof}

\begin{corollary}
Let $\{S(t)\}_{t\ge 0}$ be the right translation  $C_0$-semigroup on $L^2(\Bbb R^+, \rho) $ with $\rho$  a continuous function. Then $\{S(t)\}_{t\ge 0}$ is a strict $m$-isometry for every $t> 0$ if and only if $\rho(s)$ is a polynomial of degree $m-1$.
\end{corollary}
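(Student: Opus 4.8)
The plan is to reduce the statement to Theorem~\ref{Ths} applied at two consecutive orders, $m$ and $m-1$, supplemented by a direct finite-difference computation. Recall that $S(t)$ being a \emph{strict} $m$-isometry means that $S(t)$ is an $m$-isometry but is \emph{not} an $(m-1)$-isometry. Theorem~\ref{Ths} says that $S(t)$ is an $m$-isometry for every $t>0$ exactly when $\rho$ is a polynomial of degree less than $m$, i.e.\ of degree at most $m-1$; applying the same theorem with $m-1$ in place of $m$ gives that $S(t)$ is an $(m-1)$-isometry for every $t>0$ exactly when $\deg\rho\le m-2$. So the corollary should fall out by combining these two equivalences with the care described below.

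For necessity, I would assume that $S(t)$ is a strict $m$-isometry for every $t>0$. In particular each $S(t)$ is an $m$-isometry, so Theorem~\ref{Ths} forces $\deg\rho\le m-1$. If one had $\deg\rho\le m-2$, then Theorem~\ref{Ths} (at order $m-1$) would make $S(t)$ an $(m-1)$-isometry for \emph{every} $t>0$, contradicting strictness already at a single value of $t$. Hence $\deg\rho=m-1$. This direction is immediate once the two applications of Theorem~\ref{Ths} are in place.

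For sufficiency, suppose $\rho$ is a polynomial of exact degree $m-1$ with leading coefficient $c\neq 0$. Since $\deg\rho<m$, Theorem~\ref{Ths} gives at once that $S(t)$ is an $m$-isometry for every $t>0$; the real content is to show that no $S(t)$ is an $(m-1)$-isometry. Here lies the one genuine (though modest) obstacle: the negation of Theorem~\ref{Ths} only asserts that $S(t)$ fails to be an $(m-1)$-isometry for \emph{some} $t$, whereas strictness demands failure for \emph{every} $t$, so a per-$t$ argument is needed. Repeating the test-function computation in the proof of Theorem~\ref{Ths} at order $m-1$, for a fixed $t>0$ the operator $S(t)$ is an $(m-1)$-isometry if and only if
\[
\sum_{k=0}^{m-1}\binom{m-1}{k}(-1)^{m-1-k}\rho(s+kt)=0 \quad\text{for all } s\ge 0,
\]
the left-hand side being the $(m-1)$-st forward difference of $\rho$ with step $t$. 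For a polynomial of exact degree $m-1$ this difference equals the nonzero constant $c\,(m-1)!\,t^{m-1}$ for every $t>0$, so the displayed sum never vanishes. Thus $S(t)$ is not an $(m-1)$-isometry for any $t>0$, and therefore $S(t)$ is a strict $m$-isometry for every $t>0$, which completes the argument.
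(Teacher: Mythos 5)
Your proof is correct, and it follows the route the paper intends: the corollary is stated in the paper without proof, as an immediate consequence of Theorem~\ref{Ths} applied at orders $m$ and $m-1$. Your added care with the quantifier issue --- that the negation of Theorem~\ref{Ths} at order $m-1$ only gives failure of the $(m-1)$-isometry condition for \emph{some} $t$, so you instead reuse the per-$t$ equivalence from the theorem's proof and note that the $(m-1)$-st difference of an exact degree $m-1$ polynomial is the nonzero constant $c\,(m-1)!\,t^{m-1}$ --- is exactly the detail the paper glosses over, and it is handled correctly.
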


Consider the \emph{right  weighted translation $C_0$-semigroup}, $S_{\rho}$,  defined on $L^2(\mathbb{R}^+)$ as
$$
(S_{\rho}(t)f)(s):=\left\{\begin{array}{ccl}
  0 &\mbox{ if }  s\le t \\[0.7pc]
  {\displaystyle \frac{\rho(s)}{\rho(s-t)}} f(s-t) &\mbox{ if }  s> t \;.
  \end{array}\right.
$$
Then $\{ S_{\rho}(t)\}_{t\geq 0}$ is a strongly   continuous semigroup if and  only if $\rho$ is a right admissible weight \cite{Embry}
and  for $s,t \ge 0$ and $f \in L^2(\Bbb R^+)$
$$
 (S_{\rho}^*(t)f)(s)= \frac{\rho(s+t)}{\rho(s)}f(s+t)\;.
$$
We now improve part (2) of  \cite[Corollary~3.3]{PS}.

\begin{theorem}
Let $\{S_{\rho}(t)\}_{t\ge 0}$ be the right  weighted translation  $C_0$-semigroup on $L^2(\Bbb R^+) $ with $\rho$ a continuous function. Then $\{S_{\rho}(t)\}_{t\ge 0}$ is an $m$-isometry for every $t> 0$ if and only if $\rho(s)^2 $ is a  polynomial of degree less than $m$.
\end{theorem}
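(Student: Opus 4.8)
The plan is to follow the proof of Theorem~\ref{Ths} almost verbatim, the only structural difference being that the weight now enters through the factor $\rho(s)/\rho(s-t)$, which gets squared when one passes to norms; this is exactly why the degree condition falls on $\rho^2$ rather than on $\rho$.

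First I would use the semigroup property $S_\rho^k(t)=S_\rho(kt)$ to write
\[
(S_\rho^k(t)f)(s)=\frac{\rho(s)}{\rho(s-kt)}\,f(s-kt)\qquad(s>kt),
\]
and then compute the norm by the substitution $u=s-kt$, obtaining
\[
\|S_\rho^k(t)f\|^2=\int_0^\infty\frac{\rho(u+kt)^2}{\rho(u)^2}\,|f(u)|^2\,du .
\]
Consequently the defining identity of an $m$-isometry becomes, for every $t\ge 0$ and $f\in L^2(\mathbb{R}^+)$,
\[
\int_0^\infty g_t(u)\,|f(u)|^2\,du=0,\qquad g_t(u):=\sum_{k=0}^m\binom{m}{k}(-1)^{m-k}\frac{\rho(u+kt)^2}{\rho(u)^2}.
\]

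Next, exactly as in Theorem~\ref{Ths}, I would argue that this forces $g_t\equiv 0$: if $g_t(u_0)\neq0$ for some $u_0$, then by continuity and strict positivity of $\rho$ there is a bounded interval $I$ on which $g_t$ keeps one sign and satisfies $|g_t|\ge M>0$; testing against $f=\chi_I\in L^2(\mathbb{R}^+)$ gives $\int_I g_t=0$ with an integrand of constant sign, a contradiction. Multiplying $g_t\equiv0$ through by $\rho(u)^2$ and writing $\psi:=\rho^2$, this reads
\[
\sum_{k=0}^m\binom{m}{k}(-1)^{m-k}\psi(u+kt)=0\qquad(u,t\ge 0),
\]
i.e. every $m$-th finite difference of the continuous function $\psi$ vanishes. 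By \cite[Theorem~13.5]{Ku1} this is equivalent to $\psi=\rho^2$ being a polynomial of degree less than $m$, which proves the forward implication. Running the same chain backwards yields the converse: if $\rho^2$ is such a polynomial, the displayed difference vanishes, hence $g_t\equiv0$ and the integral identity holds for every $f$, so each $S_\rho(t)$ is an $m$-isometry.

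The only genuinely new ingredient compared with Theorem~\ref{Ths} is the reduction $S_\rho^k(t)=S_\rho(kt)$ together with the ensuing norm computation; the sign/test-function step and the appeal to \cite{Ku1} are a transcription of the earlier argument, so I do not anticipate a serious obstacle. The one point to watch is the careful bookkeeping of the squared weight, which is what makes the conclusion land on $\rho^2$ and not on $\rho$.
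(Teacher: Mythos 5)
Your proof is correct, but it takes a genuinely different route from the paper's. The paper's proof is a two-line reduction: the multiplication operator $M_{\rho}\colon L^2(\mathbb{R}^+,\rho^2)\to L^2(\mathbb{R}^+)$, $M_{\rho}f=\rho f$, is unitary, and $S_{\rho}(t)=M_{\rho}S(t)M_{\rho}^{-1}$, where $S(t)$ is the unweighted right translation semigroup acting on $L^2(\mathbb{R}^+,\rho^2)$; since the $m$-isometry identity is preserved under unitary equivalence, $\{S_{\rho}(t)\}_{t\ge 0}$ consists of $m$-isometries if and only if the translation semigroup on $L^2(\mathbb{R}^+,\rho^2)$ does, and Theorem~\ref{Ths} (applied with the weight $\rho^2$, which is right admissible whenever $\rho$ is) says this happens exactly when $\rho^2$ is a polynomial of degree less than $m$. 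You instead rerun the whole argument of Theorem~\ref{Ths} in the weighted-translation setting: the norm computation $\|S_{\rho}(kt)f\|^2=\int_0^\infty \rho(u+kt)^2\rho(u)^{-2}|f(u)|^2\,du$, the test-function step forcing $g_t\equiv 0$ (which is in fact slightly cleaner here, since on unweighted $L^2$ you may test with $f=\chi_I$ rather than $\rho^{-1/2}\chi_I$), and the appeal to the Fr\'echet functional equation \cite[Theorem~13.5]{Ku1} for $\psi=\rho^2$. What the paper's route buys is brevity and automatic transfer of the companion results (e.g., the strictness corollary); what yours buys is self-containedness, at the cost of duplicating an argument already on record. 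Both are complete and correct.
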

\begin{proof}
Consider $M_{\rho}: L^2(\Bbb R^+, \rho^2) \rightarrow L^2(\Bbb R^+) $ defined by $M_{\rho}f= \rho f$. Then
$S_{\rho}(t)= M_{\rho}S(t)M_{\rho}^{-1}$. Thus $\{S_{\rho}(t)\}_{t\ge 0}$ is an $m$-isometry for every $t> 0$ on $L^2(\Bbb R^+) $ if and only if $\{S(t)\}_{t\ge 0}$ is an $m$-isometry for every $t> 0$ on $L^2(\Bbb R^+, \rho^2)$ if and only if $\rho(s)^2 $ is a polynomial of degree less than $m$.
\end{proof}

\begin{corollary}
Let $\{S_{\rho}(t)\}_{t\ge 0}$ be the right  weighted translation  $C_0$-semigroup on $L^2(\Bbb R^+) $ with $\rho$ a continuous function. Then $\{S_{\rho}(t)\}_{t\ge 0}$ is a $2$-isometry for every $t> 0$ if and only if $\rho(s)^2=as+b$  for some constants $a$ and $b$.
\end{corollary}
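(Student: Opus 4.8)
The plan is to derive this as an immediate specialization of the preceding theorem, which characterizes when $\{S_\rho(t)\}_{t\ge 0}$ is an $m$-isometry for every $t>0$ in terms of $\rho(s)^2$ being a polynomial of degree less than $m$. Here we simply set $m=2$ and unwind what ``degree less than $2$'' means.

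Concretely, first I would invoke the previous theorem with $m=2$: the semigroup $\{S_\rho(t)\}_{t\ge 0}$ is a $2$-isometry for every $t>0$ if and only if $\rho(s)^2$ is a polynomial of degree less than $2$. The only remaining step is the elementary observation that a polynomial in $s$ of degree strictly less than $2$ is exactly a polynomial of degree at most $1$, i.e.\ a function of the form $s\mapsto as+b$ for some constants $a,b\in\mathbb{R}$. Substituting this description back into the equivalence yields precisely the stated condition $\rho(s)^2=as+b$, which closes both directions simultaneously since the theorem is already an ``if and only if.''

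There is essentially no obstacle here: the content is entirely contained in the theorem it specializes, and the corollary is a matter of rewriting ``degree less than $2$'' in explicit affine form. The only point meriting a brief remark is consistency of the conclusion with the standing hypotheses on $\rho$. Namely, since $\rho$ is continuous and strictly positive on $(0,\infty)$ (as required for $\rho$ to be a right admissible weight and for $L^2(\mathbb{R}^+)$ to be defined), the affine function $as+b$ must itself be nonnegative on $\mathbb{R}^+$; this forces $a\ge 0$ and $b\ge 0$ (with $b>0$ whenever $\rho(0^+)$ is to stay positive). I would mention this only in passing, as the corollary as stated asserts the functional form and does not require spelling out the admissible ranges of $a$ and $b$. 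Thus the proof reduces to a single sentence applying the theorem, and I would present it in exactly that form.
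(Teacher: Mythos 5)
Your proposal is correct and matches the paper exactly: the paper states this corollary without proof, treating it as the immediate specialization $m=2$ of the preceding theorem, where ``polynomial of degree less than $2$'' means $\rho(s)^2=as+b$. Your additional remark on the sign constraints for $a$ and $b$ (from positivity of $\rho$) is a harmless aside that the paper omits.
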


At continuation we characterize the weighted spaces where the adjoint of translation operator  is an  $m$-isometry.

\begin{definition} By a \emph{left admissible weighted function} in $(0,\infty)$, we mean
a measurable function $w:(0,\infty)\rightarrow \Bbb{R}$
satisfying the following conditions:
\begin{enumerate}
\item $w(\tau)>0$ for all $\tau \in (0,\infty)$,

\item there exist constants $M\geq 1$ and $\alpha \in \Bbb{R}$
such that $ w(\tau)\leq Me^{\alpha t}w(t+\tau)$ holds for all $\tau\in (0,\infty)$
and all $t >0$.
\end{enumerate}
\end{definition}

For a left admissible weighted function we define the \emph{weighted space} $L^2(\Bbb R^+, w)$ as the measurable  functions $f:\Bbb R^+\rightarrow \Bbb C$ such that
$$
\|f\|_{L^2(\Bbb R^+,\; w)} = \int_0^{\infty}|f(s)|^2 w(s)ds<\infty\;.
$$
Let $\{T(t)\}_{t\ge 0}$ be the \emph{left shift semigroup} given for $t \ge 0$ and $f\in L^2(\Bbb R^+, w)$ by
$$
(T(t)f)(s):= f(s+t)\;.
$$
Then $\{T(t)\}_{t\geq 0}$ is a strongly   continuous semigroup \cite{GEP11}. For  $f \in L^2(\Bbb R^+, w)$,
$$
(T^*(t)f)(s)=\left\{\begin{array}{ccl}
  0 &\mbox{ if }  s\le t \\[0.7pc]
   \displaystyle\frac{w(s-t)}{w(s)}f(s-t)&\mbox{ if }  s> t \;.
  \end{array}\right.
$$
\begin{theorem}
Let $\{T^*(t)\}_{t\ge 0}$ be the  adjoint of left weighted translation  $C_0$-semigroup on $L^2(\Bbb R^+, w) $ such that $w$ is a continuous function. Then $\{T^*(t)\}_{t\ge 0}$ is an $m$-isometry for every $t> 0$ if and only if $w(s) = \frac{1}{p(s)}$ for some polynomial $p$  of degree less than $m$.
\end{theorem}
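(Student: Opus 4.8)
The plan is to mirror the proof of Theorem~\ref{Ths}, reducing the statement to a finite-difference identity for $1/w$. First I would record that the left shift semigroup is commutative, so its adjoint $\{T^*(t)\}_{t\ge 0}$ is a commutative $C_0$-semigroup as well, whence $(T^*(t))^k=T^*(kt)$ for every $k\in\mathbb{N}$ and $t>0$. Thus $T^*(t)$ is an $m$-isometry for every $t>0$ precisely when $\sum_{k=0}^m\binom{m}{k}(-1)^{m-k}\|T^*(kt)f\|^2=0$ for all $f\in L^2(\mathbb{R}^+,w)$ and all $t>0$. A direct computation from the given formula for $T^*$, together with the substitution $u=s-kt$, yields
\[
\|T^*(kt)f\|_{L^2(\mathbb{R}^+,w)}^2=\int_{kt}^{\infty}\frac{w(s-kt)^2}{w(s)^2}\,|f(s-kt)|^2\,w(s)\,ds=\int_0^{\infty}\frac{w(u)^2}{w(u+kt)}\,|f(u)|^2\,du .
\]
Substituting this into the alternating sum, the $m$-isometry condition becomes
\[
\int_0^{\infty}w(u)^2\left(\sum_{k=0}^m\binom{m}{k}(-1)^{m-k}\frac{1}{w(u+kt)}\right)|f(u)|^2\,du=0
\]
for all $t>0$ and all $f\in L^2(\mathbb{R}^+,w)$.

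Setting $g:=1/w$, the bracketed term is exactly the $m$-th forward difference $\sum_{k=0}^m\binom{m}{k}(-1)^{m-k}g(u+kt)$ of $g$ with step $t$. To pass from this integral identity to a pointwise one I would test against $f=\chi_I/w$ for bounded intervals $I\subset\mathbb{R}^+$ — these lie in $L^2(\mathbb{R}^+,w)$ because $w$ is continuous and strictly positive, hence bounded away from $0$ on $I$ — which is precisely the localization argument used in Theorem~\ref{Ths}. This forces the $m$-th difference of $g$ to integrate to $0$ over every bounded interval, hence to vanish almost everywhere, and then everywhere by continuity of $w$. A continuous function all of whose $m$-th differences (for every step $t$) vanish is a polynomial of degree less than $m$ by \cite[Theorem~13.5]{Ku1}; therefore $g=1/w$ is such a polynomial $p$, i.e. $w=1/p$ with $\deg p<m$.

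For the converse I would simply reverse the computation: if $w=1/p$ with $\deg p<m$, then $g=p$ and its $m$-th difference vanishes identically, so the integrand displayed above is zero and $T^*(t)$ is an $m$-isometry for every $t>0$. The computations here are routine; the only points needing care are the identity $(T^*(t))^k=T^*(kt)$ — which is what turns the operator-theoretic $m$-isometry condition into differences with a single step — and the verification that the test functions $\chi_I/w$ genuinely belong to the weighted space, which is exactly where continuity and strict positivity of $w$ are used. An essentially equivalent route would be to observe that the unitary $U$ given by multiplication by $\sqrt{w}$ conjugates $T^*(t)$ to a weighted right translation $S_\rho(t)$ with $\rho^2=c/w$ and then invoke the weighted-translation theorem; I would nevertheless favor the direct computation, since it avoids having to check admissibility of the auxiliary weight $\rho$.
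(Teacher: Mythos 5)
Your proof is correct and follows essentially the same route as the paper's: compute $\|T^*(t)^k f\|^2$ via the change of variables $u=s-kt$, reduce the $m$-isometry identity to an integral against the $m$-th forward difference of $1/w$, localize with weight-normalized characteristic functions of bounded intervals (exactly the argument of Theorem~\ref{Ths}, which the paper invokes verbatim at this step), and conclude by Kuczma's theorem on the Fréchet functional equation, with the converse obtained by reversing the computation.
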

\begin{proof}
$T^*(t)$ is an $m$-isometry for every $t> 0$ if and only if
$$
\sum _{k=0}^m { m\choose k}(-1)^{m-k} \|T^{*k}(t)f\|^2=0\;,
$$
for all $t> 0$  and  $f \in L^2(\Bbb R^+, w)$.
Then
\begin{eqnarray*}
0&=&\sum_{k=0}^m (-1)^{m-k}{ m\choose k} (-1)^{m-k}\int_{kt}^{\infty} \left| \frac{w(s-kt)}{w(s)}f(s-kt)\right|^2w(s)ds\\[1pc]
&=& \int_0^{\infty} \left(\sum_{k=0}^m { m\choose k} (-1)^{m-k} \frac{w(u)}{w(u+kt)}\right)|f(u)|^2w(u)du \,,
\end{eqnarray*}
for all $f\in L^2(\Bbb R^+, w)$. As in the proof of Theorem \ref{Ths} we get that $\frac{1}{w(s)} $  is a polynomial of degree less than $m$.
\end{proof}

\begin{corollary}
Let $\{T^*(t)\}_{t\ge 0}$ be the  adjoint of left weighted translation  $C_0$-semigroup on $L^2(\Bbb R^+, w) $ such that $w$ is a continuous function. Then $\{T^*(t)\}_{t\ge 0}$ is a strict  $m$-isometry for every $t> 0$ if and only if $w(s) = \frac{1}{p(s)}$ for some  polynomial $p$ of degree $m-1$.
\end{corollary}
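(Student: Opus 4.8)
The plan is to read off this strict version directly from the preceding Theorem together with the definition of a strict $m$-isometry. Writing $p:=1/w$, that theorem states precisely that $\{T^*(t)\}_{t\ge 0}$ is an $m$-isometry for every $t>0$ if and only if $p$ is a polynomial of degree strictly less than $m$; I would apply it once with $m$ and once with $m-1$, so as to control the $m$-isometry and the $(m-1)$-isometry behaviour simultaneously. Since $T^*(t)$ being a \emph{strict} $m$-isometry means that it is an $m$-isometry but not an $(m-1)$-isometry, the goal is to show that the whole family is strictly $m$-isometric exactly when $\deg p = m-1$, i.e.\ when $p$ has degree $<m$ but not $<m-1$.

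First I would record, by exactly the reduction used in the proof of the preceding Theorem, that for a single fixed $t>0$ the operator $T^*(t)$ is an $(m-1)$-isometry if and only if
$$
\sum_{k=0}^{m-1}\binom{m-1}{k}(-1)^{m-1-k}\,p(u+kt)=0 \qquad \text{for all } u\ge 0 ,
$$
the factor $w(u)=1/p(u)>0$ being cancellable since the integrand must vanish against every test function. Thus $T^*(t)$ fails to be an $(m-1)$-isometry precisely when the $(m-1)$-st forward difference of $p$ with step $t$ does not vanish identically. Assuming $\deg p=m-1$, the preceding Theorem (with $m$) already gives that each $T^*(t)$ is an $m$-isometry; to show that none of them is an $(m-1)$-isometry I would use the elementary identity that the $(m-1)$-st forward difference of a degree-$(m-1)$ polynomial with leading coefficient $a_{m-1}\neq0$ and step $t$ equals the constant $(m-1)!\,a_{m-1}\,t^{m-1}$, which is nonzero for every $t>0$. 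Hence $T^*(t)$ is a strict $m$-isometry for every $t>0$. Conversely, if the family is strictly $m$-isometric for every $t>0$, it is in particular $m$-isometric for every $t$, so the theorem forces $p$ to be a polynomial with $\deg p\le m-1$; were $\deg p\le m-2$, the theorem applied with $m-1$ would make every $T^*(t)$ an $(m-1)$-isometry, contradicting strictness. Therefore $\deg p=m-1$.

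The only genuinely non-routine point is the quantifier bookkeeping: the preceding Theorem characterises being an $m$-isometry \emph{for every} $t$, whereas strictness is a statement about each individual $t$. The clean way around this is the explicit finite-difference computation above, which shows that for a degree-$(m-1)$ polynomial the $(m-1)$-st difference is a nonzero constant for \emph{every} step $t>0$; this upgrades the ``not an $(m-1)$-isometry for all $t$'' information coming from the theorem to the ``fails to be an $(m-1)$-isometry at every $t$'' statement that strictness requires. Everything else is a direct appeal to the preceding Theorem and to the definition of a strict $m$-isometry.
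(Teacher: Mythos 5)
Your proposal is correct. The paper states this corollary without any proof, treating it as an immediate consequence of the preceding theorem together with the definition of a strict $m$-isometry, so there is no written argument to compare against; your writeup is the natural filling-in of that intended deduction. Worth noting: the finite-difference step you include is not mere pedantry but genuinely needed, since applying the theorem at level $m-1$ only tells you that \emph{some} $t>0$ fails to give an $(m-1)$-isometry, whereas strictness for every $t>0$ requires failure at \emph{each} $t$; your identity $\Delta_t^{m-1}p \equiv (m-1)!\,a_{m-1}t^{m-1}\neq 0$ (valid for every step $t>0$ when $\deg p = m-1$), combined with the same test-function reduction used in the theorem's proof, closes exactly that gap. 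So your proof is, if anything, more complete than what the paper implicitly relies on.
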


\section*{Acknowledgements}
The authors thank to Almira  and Hernández-Abreu for calling their attention over Montel's theorems.

\small

\smallskip
\noindent T. Berm\'{u}dez\\
{\it Departamento de An\'alisis Matem\'atico, Universidad de La Laguna,
38271, La Laguna (Tenerife), Spain.}\\
{\it e-mail:} tbermude@ull.es

\smallskip
\noindent A. Bonilla\\
{\it Departamento de An\'alisis Matem\'atico, Universidad de La Laguna,
38271, La Laguna (Tenerife), Spain.}\\
{\it e-mail:} abonilla@ull.es

\smallskip
\noindent H. Zaway \\
{\it Departamento de An\'alisis Matem\'atico, Universidad de La Laguna,
38271, La Laguna (Tenerife), Spain.}\\
{\it Department of Mathematics, Faculty of Sciences, University of Gabes,
6072, Tunisia.} \\
{\it e-mail:} hajer\_zaway@live.fr

\end{document}